\tikzset{tab/.style={matrix of math nodes,column sep=-.4, row sep=-.4,text height=8pt,text width=8pt,align=center,inner sep=3}}
\tikzset{smalltab/.style={matrix of math nodes,column sep=-.4, row sep=-.4,text height=6pt,text width=6pt,align=center,inner sep=2.5}}
\numberwithin{equation}{section}
\numberwithin{figure}{section}
\numberwithin{table}{section}
\newtheorem{Theorem}[equation]{Theorem}
\newtheorem{Lemma}[equation]{Lemma}
\newtheorem{Corollary}[equation]{Corollary}
\theoremstyle{definition}
\newtheorem{Definition}[equation]{Definition}
\newtheorem{Example}[equation]{Example}
\newtheorem{Remark}[equation]{Remark}
\newcommand{\arxiv}[1]{\href{http://arxiv.org/abs/#1}{\tt arXiv:\nolinkurl{#1}}}
\newcommand{\wt}{{\rm wt}}
\newcommand{\Irr}{{\bm Irr}}
\newcommand{\bfv}{{\bm v}}
\newcommand{\Hom}{\text{Hom}}
\newcommand{\g}{\mathfrak{g}}
\newcommand{\im}{\operatorname{im}}
\newcommand{\End}{\text{End}}
\newcommand{\so}{\rm so}
\newcommand{\ta}{\rm ta}
\begin{document}

\title[A quiver variety approach to root multiplicities]{Quiver varieties and root multiplicities in rank 3}

\author{Patrick Chan}
\address{Department of Mathematics and Statistics, Loyola University, Chicago, IL}
\email{pchan2@luc.edu}

\author{Peter Tingley}
\address{Department of Mathematics and Statistics, Loyola University, Chicago, IL}
\email{ptingley@luc.edu}

\begin{abstract}
Building on our previous work in rank two, we use quiver varieties to give a combinatorial upper bound on dimensions of certain imaginary root spaces for rank 3 symmetric Kac-Moody algebras. We describe an explicit method for extracting combinatorics when the Dynkin diagram is bipartite (i.e. two of the nodes are not connected). As in rank two we believe these bounds are quite tight and we give computational evidence to this effect, although there is more error in rank 3 than in rank 2. 
\end{abstract}
 
\maketitle

\section{Introduction}
In \cite{T21}, we developed a method for studying root multiplicities of symmetric Kac-Moody algebras using quiver varieties. This gives a framework for finding combinatorial upper bounds on the multiplicities and we conjectured that the resulting bounds, at least in some cases, are quite tight. While the construction is general, we only translated it to combinatorics in rank two. There the upper bound consisted of the number of rational Dyck paths satisfying various conditions. 

Here we develop explicit combinatorics using the same method in certain rank three cases. Specifically, we consider a Kac-Moody algebra $\mathfrak{g}$ with Dynkin diagram
\begin{center}
\begin{tikzpicture}[scale=0.85]
\node [draw, circle, fill=black] at (0,0) {.};
\node [draw, circle, fill=black] at (3,0) {.};
\node [draw, circle, fill=black] at (6,0) {.};
\draw[line width = 0.02cm] (0,0)--(3,0);
\draw[line width = 0.02cm] (3,0)--(6,0);

\draw node at (0,-0.5){2};
\draw node at (3,-0.5){1};
\draw node at (6,-0.5){3};
\draw node at (1.5,0.25) {$s$};
\draw node at (4.5,0.25) {$t$};
\end{tikzpicture}
\end{center}
\noindent meaning there are $s$ edges on the left and $t$ edges on the right. 
Our main result is the following:
\begin{Theorem} \label{th:main} 
For any imaginary root $\beta= a\alpha_1+b\alpha_2+c \alpha_3$ with $\text{gcd}(a,b,c)=1$, the root multiplicity $m_\beta$ is bounded above by the number of words of the form 
\begin{equation}1^{a_1} 2^{b_1}3^{c_1} 1^{a_2} 2^{b_2} 3^{c_2} \cdots,\end{equation}
where the number of $1$s is a, the number of 2s is b, the number of 3s is c, 
and, 
\begin{enumerate}

\item \label{main0} For each $i$, $a_i \neq 0$ and $b_i$ or $c_i$ is also non-zero.

\item \label{main1} Draw a path in the plane by drawing each 1 as a vertical line of length 1, each 2 as a horizontal line of length $s$, and each 3 as a horizontal line of length $t$. The result is a rational Dyck path. That is, a straight line connecting the beginning and end of the path stays weakly below the path. 

\item \label{main2}  If a prefix $1^{a_1} 2^{b_1}3^{c_1} \cdots 1^{a_k} 2^{b_k}3^{c_k}$ corresponds to a point where the Dyck path touches the diagonal, then $\displaystyle  \frac{b_1+\cdots +b_k}{c_1+\cdots+c_k} \geq  \frac{b}{c}.$ Equivalently, if you deform the path to have each 2 correspond to an edge of length $s-\epsilon$, the resulting path still stays above its diagonal. 

\item \label{part:main3easy-s} For each $i \geq 1$,   $\displaystyle  \frac{b_i}{a_i} \leq s$ and  $\displaystyle  \frac{c_i}{a_i} \leq t$.

\noindent $\mbox{}$ \hspace{-1.7cm} For each $i\geq 1,$ let $n_{Bi}=  \min \{ b_i, s a_{i+1}-b_{i+1}\}$,  $n_{Ci}=  \min \{ c_i, t a_{i+1}-c_{i+1}\}.$ Then

\item  \label{part:combo} $\displaystyle a_{i+1} \leq s n_{Bi}+tn_{Ci}- \max \{s^{-1} n_{Bi}, t^{-1} n_{Ci} \}.$

\item \label{part:ratio} $\displaystyle \frac{a_{i+1}}{sn_{Bi}+tn_{Ci}} \leq \frac{1}{2}+\frac{\sqrt{(s^2+t^2)^2-4(s^2+t^2)}}{2(s^2+t^2)}$. Note: if $sn_{Bi}+tn_{Ci}=0$ then condition \eqref{part:combo} would have been violated.

\end{enumerate}
\end{Theorem}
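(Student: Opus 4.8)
The plan is to treat Theorem~\ref{th:main} as a purely combinatorial refinement of the general geometric upper bound from \cite{T21}. Recall that in \cite{T21} the root multiplicity $m_\beta=\dim\g_\beta$ is bounded above by a count of irreducible components of Lusztig's nilpotent variety $\Lambda_\beta$ for the doubled quiver of the Dynkin diagram: precisely, by those components whose generic point, viewed as a representation of the preprojective algebra, cannot be split off along the relevant cocharacter/Harder--Narasimhan filtration, i.e.\ the components that genuinely contribute to the Lie part rather than to products of lower root vectors in $U(\mathfrak{n}^-)$. I would take this bound as the input, so that the entire content of the proof becomes combinatorial: enumerate these ``indecomposable'' components in the rank $3$ case and show they are in bijection with the words satisfying conditions \eqref{main0}--\eqref{part:ratio}.

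First I would fix a convex order on the positive roots adapted to the bipartite structure (nodes $2$ and $3$ unjoined, both joined to node $1$), so that $\Irr(\Lambda_\beta)$ is parametrized by Lusztig data / MV polytopes, and show that reading such a datum against this order produces exactly a word $1^{a_1}2^{b_1}3^{c_1}1^{a_2}2^{b_2}3^{c_2}\cdots$. The block structure is forced by the alternation between the ``node $1$'' part and the commuting pair of ``node $2,3$'' parts, and condition \eqref{main0} records that each block must mix the two sides to be indecomposable. I would then identify the planar path of condition \eqref{main1}: drawing each $1$ as a unit vertical step and each $2$, resp.\ $3$, as a horizontal step of length $s$, resp.\ $t$, encodes the bilinear form values $\langle\alpha_1,\alpha_2\rangle=-s$ and $\langle\alpha_1,\alpha_3\rangle=-t$, and convexity of the Lusztig datum becomes exactly the rational Dyck path condition, just as in rank two. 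Condition \eqref{main2} is the sharpening at the lattice points where the path meets its diagonal: a touch corresponds to a place where the component could split off a subrepresentation of smaller imaginary type, and the displayed ratio inequality (equivalently the $s-\epsilon$ deformation) is precisely the statement that any such splitting lies along the ray $\bq\beta$ and hence does not produce a genuine decomposition. Conditions \eqref{part:main3easy-s} are then the local validity constraints coming from the two rank-$2$ sub-root-systems $\langle\alpha_1,\alpha_2\rangle$ and $\langle\alpha_1,\alpha_3\rangle$.

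The crux is conditions \eqref{part:combo} and \eqref{part:ratio}, and this is where I expect the real work. Here I would compute, for consecutive blocks $i$ and $i+1$, the dimension of the stratum of $\Lambda_\beta$ whose generic representation has the prescribed multiplicities, using the Euler form of the doubled quiver; the quantities $n_{Bi}=\min\{b_i,\,sa_{i+1}-b_{i+1}\}$ and $n_{Ci}=\min\{c_i,\,ta_{i+1}-c_{i+1}\}$ are the amounts of ``node $2$'' and ``node $3$'' material available to glue onto the $a_{i+1}$ new copies of node $1$. Demanding that this stratum actually forms an irreducible component contributing to $\g_\beta$ (rather than lying in the closure of a product stratum) becomes an inequality that, after maximizing $a_{i+1}$ against the available material, is quadratic in $a_{i+1}$; its solution yields the $\max\{s^{-1}n_{Bi},t^{-1}n_{Ci}\}$ correction of \eqref{part:combo}, and the quadratic formula produces the discriminant $\sqrt{(s^2+t^2)^2-4(s^2+t^2)}$ of \eqref{part:ratio}.

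The \textbf{main obstacle} will be making the dimension count above precise enough to be tight rather than merely necessary: in rank two the corresponding estimate involved a single edge multiplicity and a single slope, whereas here the two commuting directions $2$ and $3$ attached to node $1$ interact, and one must simultaneously control how much of each is consumed in forming indecomposables. I would organize this by reducing to the two rank-$2$ subsystems for the purely local bounds and handling the genuinely rank-$3$ coupling through the combined quantity $sn_{Bi}+tn_{Ci}$, being careful with the degenerate cases (a block with $b_i$ or $c_i$ zero, or $sn_{Bi}+tn_{Ci}=0$), which are exactly the situations excluded by condition \eqref{main0} and by the note accompanying \eqref{part:ratio}. Verifying that the resulting count matches the component count of the \cite{T21} bound, and not a strictly larger set, is the delicate endpoint of the argument.
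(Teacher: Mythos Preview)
Your framework is in the right spirit---the bound does come from counting stable irreducible components and showing their string data satisfy \eqref{main0}--\eqref{part:ratio}---but two of your proposed mechanisms are off, and one of them is a genuine gap.

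First, a smaller point: the word $1^{a_1}2^{b_1}3^{c_1}\cdots$ is not Lusztig data for a convex order but the \emph{string data} for the periodic sequence $1,2,3,1,2,3,\ldots$, i.e.\ the dimensions of the successive layers of the graded socle filtration of a generic module in the component. This matters because every condition in the theorem is proved by reading off submodules and subquotients from that filtration: a prefix of the word is literally a submodule, so the Dyck condition \eqref{main1} and the tie-break \eqref{main2} are direct translations of the stability inequality $\arg c(S)\ge\arg c(T)$ for the chosen charge, not convexity of a Lusztig datum. Likewise \eqref{main0} is the observation that if $a_i=0$ the adjacent layers of the socle filtration coincide (since $\mathbb{C}_2$ and $\mathbb{C}_3$ have no extensions between them), and \eqref{part:main3easy-s} is injectivity of the relevant arrow maps on each layer.

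The real gap is your account of \eqref{part:combo} and \eqref{part:ratio}. These are \emph{not} obtained by an Euler-form dimension count showing a stratum is a component, and the discriminant $\sqrt{(s^2+t^2)^2-4(s^2+t^2)}$ does not arise from solving a quadratic in $a_{i+1}$. In the paper, \eqref{part:combo} is a rank estimate coming straight from the preprojective relations: on the subquotient between levels $i$ and $i{+}1$, the relation ${}_2\bar x_1\circ{}_1x_2=0$ together with injectivity of ${}_1x_2$ forces $\dim\operatorname{im}{}_2\bar x_1\le n_{Bi}$ (and similarly for $C$), and then $\operatorname{im}({}_2x_1\oplus{}_3x_1)$ sits in the kernel of ${}_1\bar x_2+{}_1\bar x_3$, giving the stated bound after one more rank estimate produces the $\max\{s^{-1}n_{Bi},t^{-1}n_{Ci}\}$ term. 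For \eqref{part:ratio}, the constant $\tfrac12+\tfrac{\sqrt{(s^2+t^2)^2-4(s^2+t^2)}}{2(s^2+t^2)}$ is the boundary of the imaginary cone in the direction $a/(sb+tc)$, proved by a reflection argument: $s_2s_3$ and $s_1$ send a ratio past one boundary to a ratio past the other. The proof then shows that a subquotient at level $i{+}1$ violating \eqref{part:ratio} forces, via the preprojective relations, a subquotient at level $i$ violating the opposite bound, and one inducts down to level $1$ where the resulting submodule contradicts stability of $T$ (since $\beta$ itself, being imaginary, lies strictly inside the cone). Your Euler-form route does not supply this descent, and without it there is no reason the bound you would extract matches \eqref{part:ratio}.

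Finally, note the logical direction: you only need to show that the string data of every stable module satisfies \eqref{main0}--\eqref{part:ratio}. You do not need, and the theorem does not claim, that every word satisfying these conditions comes from a stable component; the examples later in the paper show that in general it does not.
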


Similar methods should work for any symmetric Kac-Moody algebra whose Dynkin diagram is bipartite, with the combinatorics being most similar when the Dynkin diagram is a star. The method from \cite{T21} is even more general, but as the diagram gets more complex it is harder to describe things combinatorially. 

Theorem \ref{th:main} does not give the complete list of ``local" conditions on the path, so the upper bound will diverge exponentially from the correct answer. This is in contrast to the rank 2 situation, where we believe we have listed all local conditions. Nonetheless we give evidence that the bounds are meaningful. We also discuss ways to tighten the bounds.

Root multiplicities have been studied quite extensively, see \cite{CFL:2014} for a survey. They can be calculated exactly, see \cite{BM,Pete:rec}, although the formulae are complicated.
Special cases have been further investigated and combinatorialized in \cite{FF, KLL, KM95}. But open questions remain. See e.g. \cite[Open Problems 2 and 3]{CFL:2014} and Frenkel's conjectural upper bound for hyperbolic cases \cite{Frenkel-conj}. We hope our work sheds some light on the situation, particularly on asymptotics. 

\section{Background}

\subsection{Kac-Moody algebras and $B(-\infty)$}
Consider the Kac-Moody algebra $\mathfrak{g}$ associated to a symmetric Cartan matrix $A=(a_{ij})$ with index set $I$, as in \cite{Kac:1990}. The Lie algebra $\g$ is graded by the root lattice $Q$, which is the ${\Bbb Z}$-span of the simple roots $\alpha_i$ for $i \in I$. By definition a non-zero $\beta \in Q$ is a root if $\dim \g_\beta \neq 0$, in which case $m_\beta:= \dim \g_\beta $ is called the root multiplicity. All roots are either positive, meaning they are ${\Bbb Z}_{\geq 0}$ linear combinations of the $\alpha_i$, or negative, meaning the negatives of these. Let $\Delta$ denote the set of roots and $\Delta_+$ the set of positive roots.

There is an inner product on $Q$ defined by, for simple roots $\alpha_i,\alpha_j$, $\langle \alpha_i, \alpha_j \rangle = a_{ij}$. All roots $\beta$ have the property that either $\langle \beta, \beta \rangle=2$, in which case $\beta$ is called a real root, or $\langle \beta, \beta \rangle \leq 0$, in which case $\beta$ is called an imaginary root.
The reflection corresponding to each $\alpha_i$ acts on weight space by $$s_i(\nu)= \nu- \langle \alpha_i , \nu \rangle \alpha_i.$$
This has various important properties, including:
\begin{itemize}
\item It preserves the inner product.

\item It preserves root multiplicities.

\item It preserves the set of positive imaginary roots, meaning it acts as a permutation on this set. 
\end{itemize}

\subsection{The crystal $B(-\infty)$}
We don't work directly with $\g$ here but instead with the crystal $B(-\infty)$. This is a combinatorial object related most closely not to $\g$ but to its universal enveloping algebra $U(\mathfrak{g})$. As a vector space 
$$U(\g)= U^-(\g) \otimes U^0(\g) \otimes U^+(\g),$$
where $U^-(\g), U^0(\g), U^+(\g)$ are the subalgebras generated by the negative root spaces, the Cartan subalgebra, and the positive root spaces, respectively. The graded dimension of $U^+(\g)$ is 
$$\dim U^+(\g) = \prod_{\beta \in \Delta_+} \left( \frac{1}{1-e^\beta} \right)^{m_\beta}.$$
That is, the dimension of $U^+(\g)_\gamma$ is the number of ways to write $\gamma$ as a sum of positive roots, taking into account multiplicities. This is also called the Kostant partition function of $\gamma$. 

The crystal $B(-\infty)$ is a set along with operators $e_{i}, f_{i} \colon B(-\infty) \rightarrow B(-\infty) \cup \{ \emptyset \}$ for each $i \in I$ which satisfy various axioms. There is a weight function $\wt: B(-\infty) \rightarrow Q$ and the number of elements of a given weight $\gamma$ is the dimension of the $\gamma$ weight space in $U^+(\mathfrak{g})$.
See \cite{Kashiwara:1995} or \cite{HK:2002} (which consider $B(\infty)$, the Cartan involution of $B(-\infty)$).  Here we use the geometric realization of $B(-\infty)$ from \cite{KS:1997}, explained below.

\subsection{Quiver varieties } \label{ss:QV}

Fix a graph $G$ with vertex set $I$ and edge set $E$. Let $A$ be the set of arrows, so there are two arrows for each edge $e \in E$, one pointing in each direction. For each arrow $a$ let $\so(a)$ be the source and $\ta(a)$ be the target, meaning $a$ points from $\so(a)$ to $\ta(a)$. 
\begin{Definition}
The path algebra ${\Bbb C}[G]$ is the ${\Bbb C}$-algebra with basis consisting of all paths in $G$ (sequences of arrows $a_k \cdots a_1$ with $\so(a_{i+1})=\ta(a_i)$ along with the lazy paths $e_i$ at each vertex) and multiplication given by 
$$(b_{k} \cdots b_1) (a_j \cdots a_1) = 
\begin{cases}
b_k \cdots b_1 a_j \cdots a_1 \quad \quad \text{ if } \so(b_1)=\ta(a_j) \\
0 \quad \quad \quad \quad \quad  \quad \quad \quad \ \   \text{otherwise}.
\end{cases}
$$
\end{Definition}
\noindent Choose a subset $\Omega$ of $A$ where each edge appears in exactly one direction (sometimes called an orientation of the graph) and define $\epsilon(a)= 1$ if $a \in \Omega$ and $-1$ otherwise. For $a \in  A$, let $\bar a$ denote the reverse arrow. 

\begin{Definition}
The preprojective algebra $\Lambda$ is the quotient of ${\Bbb C}[G]$ by the ideal generated by
$$\epsilon =   \sum_{a \in A} \epsilon (a) \bar a a.$$
\end{Definition}

\begin{Definition}
For any $I$-graded vector space $V = \oplus_I V_i$, let $\Lambda(V)$ be the variety of actions of $\Lambda$ on $V$ where the lazy path at $i$ acts as projection onto $V_i$, and which are nilpotent in the sense that all paths of length at least $\dim V$ act as $0$. 
\end{Definition}

An action of ${\Bbb C}[G]$ on $V$ is determined by a homomorphism for each arrow so can be described as $$(x_a)_{a\in A} \in \oplus_A \Hom (V_{\so(a)}, V_{\ta(a)}).$$
$\Lambda(V)$ is a sub-variety of this space. Up to isomorphism it depends only on the dimension vector $\bfv =(v_i)_{i \in I}$, where $v_i= \dim V_i$. 

Associate to $G$ a symmetric Cartan matrix whose index set is the set of vertices, the diagonal entries are $2$, and other entries are defined by setting $-a_{ij}$ to be the number of edges connecting $i$ and $j$. We identify a dimension vector $\bfv$ with $\gamma= \sum_i v_i \alpha_i \in Q$ and sometimes denote $\Lambda(V)$ by $\Lambda(\gamma)$. Denote the set of irreducible components of $\Lambda(\gamma)$ by $\Irr \Lambda (\gamma)$.
The following is due to Kashiwara and Saito \cite{KS:1997}, see also \cite[Proposition 3.14]{NT}.

\begin{Theorem} \label{th:QVrealization}
The crystal $B(-\infty)$ can be indexed by 
$\coprod \Irr \Lambda (\gamma)$. The operation $f_i^{\text{max}}$ which applies the crystal operator $f_i$ as many times as possible before returning $\emptyset$ acts on $X \in \Irr \Lambda (\bfv)$ as follows:
Fix $T \in X$. Let $\text{Soc}_i(T)$ be the intersection of the socle of $T$  with $V_i$ and set $\gamma'= \gamma-\dim \text{Soc}_i(T) \alpha_i$. Generically $T/{\text{Soc}_i}(T)$ is isomorphic to a point in a unique $Y \in \Irr \Lambda(\gamma')$, and $f_i^\text{max} X =Y$.   \qed
\end{Theorem}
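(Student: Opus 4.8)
The plan is to follow Kashiwara and Saito: first equip $\coprod_\gamma \Irr\Lambda(\gamma)$ with crystal operators defined purely geometrically, and then invoke the abstract characterization that singles out $B(-\infty)$ among all crystals. Put $\wt(X) = -\gamma$ for $X \in \Irr\Lambda(\gamma)$. The governing invariant is the $i$-socle dimension: for $T \in \Lambda(\gamma)$ set $\sigma_i(T) = \dim \text{Soc}_i(T)$, and for a component $X$ let $\sigma_i(X)$ be the common value of $\sigma_i(T)$ on a dense open subset of $X$. This is well defined because $T \mapsto \dim \text{Soc}_i(T)$ is upper semicontinuous, hence constant (equal to its minimum) on a dense open subset of the irreducible variety $X$; this same semicontinuity is what the word ``generically'' in the statement refers to.

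The geometric core is a stratification argument. Fixing $i$ and $c \ge 0$, I would consider the locally closed stratum $\Lambda(\gamma)_{i,c} \subseteq \Lambda(\gamma)$ on which $\sigma_i = c$, and study the correspondence sending $T$ to $T/\text{Soc}_i(T) \in \Lambda(\gamma - c\alpha_i)_{i,0}$. The aim is to realize this correspondence through a diagram
\[
\Lambda(\gamma)_{i,c} \xleftarrow{\ p\ } \widetilde{\Lambda} \xrightarrow{\ q\ } \Lambda(\gamma - c\alpha_i)_{i,0},
\]
in which $p$ and $q$ are smooth and surjective with connected fibres (the fibres of one being affine spaces recording extension data, and of the other a Grassmannian of admissible socle subspaces). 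Smooth surjections with connected fibres induce bijections on irreducible components, so the composite gives, for each $c$, a bijection between the components of the two strata; the requisite smoothness and the Lagrangian dimension estimates are exactly where the defining relation of $\Lambda$ is used. Declaring this bijection to be $f_i^{\max}$, and recovering $e_i$ and $f_i$ by passing between adjacent strata, yields precisely the description in the statement: a generic $T \in X$ has $\dim \text{Soc}_i(T) = \sigma_i(X)$, and $T/\text{Soc}_i(T)$ lands in the single component $Y$ that is the image of $X$ under $q \circ p^{-1}$.

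With the operators in hand, I would verify the Kashiwara crystal axioms (weight compatibility, the relation $\varphi_i = \varepsilon_i + \langle \alpha_i, \wt(X)\rangle$, and the reciprocity between $e_i$ and $f_i$) directly from the stratification, and then identify the result with $B(-\infty)$ via the uniqueness characterization of Kashiwara and Saito. This requires producing a second, commuting ($*$-)crystal structure, which here arises from the involution of $\Lambda(V)$ reversing all arrows. The characterization then applies once one checks that $\coprod_\gamma \Irr\Lambda(\gamma)$ is connected, has a unique weight-zero element (the class of the zero representation, where $\gamma = 0$), and that the two crystal structures satisfy the required compatibility.

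The main obstacle is the middle step: showing that the set-theoretic socle-quotient correspondence is genuinely a smooth connected-fibred bundle on each stratum, so that it descends to an honest bijection on irreducible components rather than a merely generically defined multivalued map. This is where the Lagrangian nature of $\Lambda(V)$ and a careful dimension count are indispensable, and it is the step that turns the word ``unique'' in the statement into a theorem rather than a definition. By comparison, the final identification with $B(-\infty)$ is formal once the $*$-structure is available, and the semicontinuity in the first step is routine.
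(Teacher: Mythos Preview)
The paper does not give its own proof of this theorem: it is stated with a \qed\ and attributed to Kashiwara--Saito \cite{KS:1997} (with a pointer also to \cite[Proposition~3.14]{NT}). Your outline is precisely the Kashiwara--Saito argument that the paper is citing: the semicontinuity of $T\mapsto\dim\mathrm{Soc}_i(T)$, the locally closed stratification by $i$-socle dimension, the smooth connected-fibre correspondence inducing a bijection on components of each stratum, and the identification with $B(-\infty)$ via the $*$-crystal structure and Kashiwara's uniqueness criterion. So there is nothing to compare against; you have supplied what the paper deliberately omits, and your sketch matches the cited source.
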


\begin{Example}
The case $s=2,t=1$ is particularly important here. That is, the Dynkin diagram 
\begin{center}
\begin{tikzpicture}[scale=0.85]
\node [draw, circle, fill=black] at (0,0) {.};
\node [draw, circle, fill=black] at (3,0) {.};
\node [draw, circle, fill=black] at (6,0) {.};
\draw[line width = 0.02cm] (0,-0.1)--(3,-0.1);
\draw[line width = 0.02cm] (0,0.1)--(3,0.1);
\draw[line width = 0.02cm] (3,0)--(6,0);

\draw node at (0,-0.5){2};
\draw node at (3,-0.5){1};
\draw node at (6,-0.5){3};
\draw node at (1.5,0.3) {$a_1$};
\draw node at (1.5,-0.3) {$a_2$};
\draw node at (4.5,0.2) {$a_3$};
\end{tikzpicture}
\end{center}

\noindent with two edges on the left and one on the right. Orient all the edges to point away from the center node, and call  the oriented arrows $a_1,a_2,a_3$. 
A representation of ${\Bbb C}[G]$ on $V_1 \oplus V_2 \oplus V_3$ consists of 6 maps:
$$x_{a_1}, x_{a_2} : V_1 \rightarrow V_2, \quad x_{a_3}: V_1 \rightarrow V_3, \quad x_{\bar a_1}, x_{\bar a_2}: V_2 \rightarrow V_1, \quad x_{\bar a_3}: V_3 \rightarrow V_1.$$ 
$\Lambda(v)$ is the sub-variety cut out by the condition that all paths of length greater than $\dim V_1+\dim V_2+\dim V_3$ act as 0  and the preprojective relations
$$
\bar a_1 a_1+\bar a_2 a_2 +\bar a_3 a_3=0, \quad a_1 \bar a_1+ a_2 \bar a_2=0, \quad  a_3 \bar a_3=0 ,
$$
where these are equations in $\End (V_1), \End (V_2), \End(V_3)$ respectively. 
\end{Example}

\subsection{Stability conditions}
We loosely follow \cite{BKT:2014}, drawing on notation from \cite{TW} as well. See also \cite{T21}.

\begin{Definition}
A {\bf charge} $c$ is a linear function $c: Q \rightarrow {\Bbb C}$ such that $c(\alpha_i)$ is in the upper half plane for all simple roots $\alpha_i$. 
\end{Definition}
For a fixed charge $c$, each representation $V$ of $\Lambda$ has a unique filtration 
$$\emptyset = V_0 \subset V_1 \subset \cdots \subset V_k= V$$
where the sub-quotients $\overline V_i=V_i/V_{i-1}$ satisfy
\begin{enumerate}
\item[(HN1)] $ \overline V_i$ has no submodule $S$ with $\arg(c(\dim S))< \arg(c( \dim  \overline V_i))$,

\item [(HN2)] $\arg(c (\dim  \overline V_1)) < \arg(c (\dim  \overline V_2)) < \cdots < \arg(c (\dim  \overline V_k))$.

\end{enumerate}
Here $\arg(z)$ is the angle formed by $z,0$ and $1$ in the complex plane. We call this the HN filtration since it is a special case of a Harder-Narasimhan filtration as in e.g. \cite{Rudakov97}.

\subsection{String data} \label{ss:stringdata}
The following was first studied by Kashiwara 
\cite[\S 8.2]{Kashiwara:1995} and  
was further developed by Littelmann \cite{Littelmann:1998}. 
Choose a sequence 
$i_1, i_2, i_3 \ldots$
of nodes in the Dynkin diagram with each appearing infinitely many times. 
The {\bf string data} $(a_1,a_2, \ldots)$ of $b \in B(-\infty)$ is 
\[
\begin{aligned}
& a_1= \max \{ n : f_{i_1}^n b \neq 0 \}, \\
&  a_2= \max \{ n : f_{i_2}^n f_{i_1}^{a_1} b \neq 0\},
\end{aligned}
\]
and so on. 
Record this as a word in the letters $I$ consisting of $a_1$ $i_1$'s, followed by $a_2$ $i_2$'s, and so on.  Sometimes we write this as
$$i_1^{a_1} i_2 ^{a_2} \cdots i_k ^{a_k}.$$
The string data uniquely determines an element $b \in B(-\infty)$. 

Indexing $B(-\infty)$ by $\sqcup \Irr \Lambda(\bfv)$, Theorem \ref{th:QVrealization} shows that the string data of $X \in\Irr \Lambda(\bfv)$ gives the dimensions of the graded socle filtration of a generic $T \in X$:
$$
\begin{aligned}
& a_1 = \dim \Hom({\Bbb C}_{i_1}, T), \\
& a_2= \dim \Hom({\Bbb C}_{i_2}, T/i_1 \text{socle}) ,\\
\end{aligned}
$$
and so on, where ${\Bbb C}_i$ is the one dimensional simple module in degree $i$. 

\begin{Remark} \label{rem:connected}
There are only non-trivial extensions between ${\Bbb C}_i$ and ${\Bbb C}_j$ if nodes $i$ and $j$ are connected in the Dynkin diagram. 
\end{Remark}

\subsection{Stablilty and root multiplicities}
Fix a stability condition $c$ so that, for any roots $\alpha$ and $\beta$, if $\arg c(\alpha)=\arg c(\beta)$ then $\beta$ and $\alpha$ are parallel. The following is the key to our method. It can be extracted from \cite{BKT:2014}, see \cite[Theorem 3.3]{T21} for the exact statement. 

\begin{Theorem} \label{th:BigK}
For any $\gamma \in Q_+$, the number of stable irreducible components of $\Lambda(\gamma)$ is the sum over all ways of writing $\gamma=v_1\beta_1+\cdots+v_n \beta_n$ as a sum of parallel roots $\beta_k$
of the product $m_{\beta_1} \cdots m_{\beta_n}$ of the corresponding root multiplicities. 
\end{Theorem}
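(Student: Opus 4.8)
The plan is to use the charge $c$ to stratify each variety $\Lambda(\gamma)$ by its Harder--Narasimhan type, thereby reducing the count of \emph{all} irreducible components to a count of components supported on a single ray, and then to match the resulting identity against the Kostant partition function via the identification $\dim U^+(\g)_\gamma = |\Irr \Lambda(\gamma)|$ supplied by Theorem~\ref{th:QVrealization}. Throughout, by a \emph{stable} component I mean one whose generic representation has trivial HN filtration, i.e.\ is semistable for $c$; I write $S(\gamma)$ for the number of such components and $N(\gamma) = |\Irr\Lambda(\gamma)|$. Since $c$ is chosen so that roots of equal argument are parallel, $S(\gamma)$ is automatically $0$ unless $\gamma$ lies on a ray $\br_{\geq 0}\delta$ containing roots, which is exactly the situation addressed by the right-hand side of the statement.

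\textbf{Step 1 (geometry).} First I would record the HN stratification: by existence and uniqueness of the HN filtration, $\Lambda(\gamma)$ is the disjoint union, over tuples $(\gamma_1,\dots,\gamma_k)$ with $\gamma_1+\dots+\gamma_k=\gamma$ and $\arg c(\gamma_1)<\dots<\arg c(\gamma_k)$, of the locally closed stratum of representations whose HN pieces have dimension vectors $\gamma_i$. Taking the associated graded gives a map from each stratum to $\prod_i \Lambda^{\mathrm{ss}}(\gamma_i)$, the fiber being the space of iterated extensions that realize a fixed associated graded with the prescribed strictly increasing arguments. The crux is to show that this fibration has irreducible fibers of dimension exactly the codimension of the stratum, so that passing to top-dimensional components yields a bijection
\begin{equation*}
\Irr \Lambda(\gamma) \;\longleftrightarrow\; \coprod_{(\gamma_1,\dots,\gamma_k)}\ \prod_{i=1}^k \Irr \Lambda^{\mathrm{ss}}(\gamma_i),
\end{equation*}
and hence $N(\gamma) = \sum \prod_i S(\gamma_i)$, the sum over HN tuples. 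Equivalently, forming generating series in the $e^\gamma$, this says $\sum_\gamma N(\gamma)e^\gamma$ equals the product of the series $\sum_{\arg c(\gamma)=\theta}S(\gamma)e^\gamma$, taken in order of increasing argument $\theta$.

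\textbf{Step 2 (algebra).} On the other hand, Theorem~\ref{th:QVrealization} gives $\sum_\gamma N(\gamma)e^\gamma = \dim U^+(\g) = \prod_{\beta\in\Delta_+}(1-e^\beta)^{-m_\beta}$, the Kostant partition function. Grouping the roots by argument factors this as a product over $\theta$ of $P_\theta := \prod_{\arg c(\beta)=\theta}(1-e^\beta)^{-m_\beta}$. Because $c$ separates arguments, both factorizations of $\sum_\gamma N(\gamma)e^\gamma$ are compatible with the grading by argument, and an induction on the finitely many ordered arguments appearing in a fixed $\gamma$ — using that a representation supported on a single ray can only be destabilized within that ray — forces $\sum_{\arg c(\gamma)=\theta} S(\gamma)e^\gamma = P_\theta$. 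Reading off the coefficient of $e^\gamma$ expands $P_\theta$ as a sum over decompositions of $\gamma$ into parallel roots $\gamma = \sum_k v_k\beta_k$, each weighted by the product of the corresponding multiplicities $m_{\beta_k}$ (with the symmetric-power factors recorded precisely in \cite[Theorem~3.3]{T21}); this is exactly the asserted value of $S(\gamma)$.

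\textbf{Main obstacle.} I expect the essential difficulty to be the geometric input of Step 1: proving that the associated-graded fibration over $\prod_i\Lambda^{\mathrm{ss}}(\gamma_i)$ has irreducible fibers of constant dimension equal to the stratum's codimension. This rests on the Lagrangian nature of $\Lambda(\gamma)$ together with a dimension count for $\Ext^1$ between HN pieces of strictly increasing argument, so that generic extensions sweep out a single large cell and the stratum-by-stratum count collapses to a clean bijection on top components. A secondary, purely bookkeeping obstacle is the extraction in Step 2: since a weight can split across several rays in more than one way, isolating the single-ray factor $P_\theta$ requires the inductive argument on the argument order rather than a direct coefficient comparison.
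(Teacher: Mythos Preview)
The paper does not prove this theorem; it simply cites it from \cite{BKT:2014} (and points to \cite[Theorem~3.3]{T21} for the precise statement). Your outline is essentially the argument that appears in those references: stratify $\Lambda(\gamma)$ by Harder--Narasimhan type, use the Lagrangian property and an $\Ext^1$ dimension count to show that each stratum contributes exactly $\prod_i S(\gamma_i)$ top-dimensional components, and then match the resulting factorization of $\sum_\gamma |\Irr\Lambda(\gamma)|\,e^\gamma$ against the Kostant product $\prod_\beta (1-e^\beta)^{-m_\beta}$ grouped by argument. You have also correctly identified where the real work lies, namely the irreducibility and dimension of the extension fibres in Step~1; this is exactly the content that \cite{BKT:2014} supplies. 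One small phrasing issue: the sentence ``a representation supported on a single ray can only be destabilized within that ray'' is not literally true and is not what you need; what you actually use, and state correctly elsewhere, is the unique-factorization property of a product of series each supported on a distinct ray with constant term $1$.
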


\begin{Corollary}
If $\gamma$ is not parallel to any smaller root then the number of stable irreducible components of $\Lambda(\gamma)$ is exactly $m_\gamma$. Equivalently, $m_\gamma$ is the number of string data associated to stable irreducible components. This in particular holds if $\gamma = a_1 \alpha_1 + \cdots + a_k \alpha_k$ with $\text{gcd} (a_1, \ldots, a_k)=1$. 
\end{Corollary}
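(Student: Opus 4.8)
The plan is to read off the Corollary as a degenerate case of Theorem~\ref{th:BigK}, whose right-hand side is a sum over all decompositions of $\gamma$ into parallel positive roots weighted by products of multiplicities. The entire content is to show that the hypothesis forces this sum to collapse to a single term.

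First I would examine a generic summand. A decomposition $\gamma = v_1\beta_1 + \cdots + v_n\beta_n$ contributing to the sum has all the $\beta_k$ mutually parallel, hence (since they add up with positive coefficients to $\gamma$) each $\beta_k$ is parallel to $\gamma$; moreover, because the complementary summands lie in $Q_+$, each $\beta_k$ satisfies $\gamma - \beta_k \in Q_+$. Thus every $\beta_k$ is a positive root that is parallel to $\gamma$ and no larger than $\gamma$ in the $Q_+$ ordering.

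Next I would apply the hypothesis. If some $\beta_k \neq \gamma$, then $\gamma - \beta_k$ is a nonzero element of $Q_+$ and $\beta_k$ is a root parallel to $\gamma$, i.e. a strictly smaller parallel root, contradicting the assumption. Hence $\beta_k = \gamma$ for all $k$, so $\gamma = (\sum_k v_k)\gamma$ forces $\sum_k v_k = 1$, whence $n=1$ and $v_1=1$. The only admissible decomposition is therefore $\gamma = \gamma$, which occurs exactly when $\gamma$ is a root, and it then contributes precisely $m_\gamma$ (if $\gamma$ is not a root there are no admissible decompositions and both sides vanish). This gives the first assertion. For the reformulation, I would recall from Section~\ref{ss:stringdata} that the string datum of an irreducible component determines that component uniquely, so counting stable components is identical to counting the string data arising from stable components.

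Finally, for the special case I would check that $\gcd(a_1, \ldots, a_k)=1$ implies the hypothesis: any positive root parallel to and smaller than $\gamma$ would equal $r\gamma$ for some rational $0 < r < 1$ with $r\gamma \in Q$; writing $r = p/q$ in lowest terms forces $q \mid a_i$ for every $i$, contradicting $\gcd(a_i)=1$ unless $q=1$, i.e. $r \geq 1$. I expect the only genuine subtlety — and hence the step to get airtight — to be the correct reading of "parallel'' and "smaller'' in Theorem~\ref{th:BigK} together with the edge case in which $\gamma$ itself fails to be a root; once those are pinned down the remaining argument is essentially bookkeeping.
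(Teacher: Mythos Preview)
Your proposal is correct and matches the paper's approach: the Corollary is stated without proof, as an immediate consequence of Theorem~\ref{th:BigK}, and your argument is precisely the unpacking of why the sum in that theorem collapses to the single term $m_\gamma$ under the hypothesis. The details you supply (each $\beta_k$ is parallel to $\gamma$ and lies below it in $Q_+$, the string-data bijection, and the $\gcd$ reduction) are all the ones the paper leaves implicit.
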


\section{Rank three and proof of Theorem \ref{th:main}}

\subsection{Setup}

 As in the introduction, consider the case of the Dynkin diagram
\begin{center}
\begin{tikzpicture}[scale=0.85]
\node [draw, circle, fill=black] at (0,0) {.};
\node [draw, circle, fill=black] at (3,0) {.};
\node [draw, circle, fill=black] at (6,0) {.};
\draw[line width = 0.02cm] (0,0)--(3,0);
\draw[line width = 0.02cm] (3,0)--(6,0);

\draw node at (0,-0.5){2};
\draw node at (3,-0.5){1};
\draw node at (6,-0.5){3};
\draw node at (1.5,0.25) {$s$};
\draw node at (4.5,0.25) {$t$};
\end{tikzpicture}
\end{center}
The corresponding Cartan matrix is
 $$
\left(
\begin{array}{rrr}
2&-s&-t\\
-s&2&0\\
-t&0&2
\end{array}
\right).
$$
Consider the charge defined by
$$c(\alpha_1)=-1+i, c(\alpha_2) =(s-\epsilon)(1+i), c(\alpha_3)= t(1 +i) $$
where $\epsilon$ is infinitesmal.  
Then $\arg (c(a_1 \alpha_1+b_1 \alpha_2 +c_1\alpha_3)) < \arg (c( a_2 \alpha_1+b_2 \alpha_2 +c_2\alpha_3))$ exactly if
$$\begin{cases}
\frac{sb_1+tc_1}{a_1} > \frac{sb_2+tc_2}{a_2}, \text{ or }  \\
\frac{sb_1+tc_1}{a_1} = \frac{sb_2+tc_2}{a_2} \text{ and }  \frac{b_1}{c_1} < \frac{b_2}{c_2}. 
\end{cases}
$$
If $\beta=a \alpha_1+b \alpha_2 +c\alpha_3$ and  $\text{gcd}(a,b,c)=1$ then Theorem \ref{th:BigK} shows that $m_\beta$ is the number of stable irreducible components of $\Lambda(\beta)$. We will prove Theorem \ref{th:main} by showing that the string data of each stable irreducible component satisfies all conditions in that theorem. Therefore counting words satisfying those conditions gives an over-estimate of the root multiplicity. 

\begin{Remark}
We could in principle use any charge to get an upper bound on root multiplicities. This one is convenient for technical reasons. 
\end{Remark}

\subsection{Two lemmas about root space}

\begin{Lemma} \label{lem:ref} 
Fix $\gamma =a\alpha_1 +b\alpha_2+c\alpha_3 \in Q$ with $a,b,c \geq 0$. 
\begin{itemize}
\item Assume $\frac{a}{sb+tc}<  \frac{1}{2}-\frac{\sqrt{(s^2+t^2)^2-4(s^2+t^2)}}{2(s^2+t^2)}$ and
$s_2s_3\beta= a\alpha_1 +b'\alpha_2+c'\alpha_3$ has $b',c' \geq 0$. Then
$\frac{a}{sb'+tc'}>  \frac{1}{2}+\frac{\sqrt{(s^2+t^2)^2-4(s^2+t^2)}}{2(s^2+t^2)}$.

\item Assume  $\frac{a}{sb+tc}>   \frac{1}{2}+\frac{\sqrt{(s^2+t^2)^2-4(s^2+t^2)}}{2(s^2+t^2)}$ and 
$s_1\beta= a'\alpha_1 +b\alpha_2+c\alpha_3$ has $a' \geq 0$. Then
$\frac{a'}{sb+tc}<  \frac{1}{2}-\frac{\sqrt{(s^2+t^2)^2-4(s^2+t^2)}}{2(s^2+t^2)}$.
\end{itemize}
\end{Lemma}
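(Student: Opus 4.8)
The plan is to reduce both bullets to elementary facts about a single Möbius transformation, after recording the explicit action of the relevant reflections on the coefficient vector $(a,b,c)$ (treating $\beta=\gamma$, as the statement intends). Throughout I write $K=s^2+t^2$ and observe that the two bounds appearing in the statement,
\[
r_- = \frac{1}{2} - \frac{\sqrt{(s^2+t^2)^2-4(s^2+t^2)}}{2(s^2+t^2)}, \qquad r_+ = \frac{1}{2} + \frac{\sqrt{(s^2+t^2)^2-4(s^2+t^2)}}{2(s^2+t^2)},
\]
are exactly the two roots of $Kx^2-Kx+1=0$. Hence $r_-+r_+=1$ and $r_-r_+=1/K$, and I will use these two identities repeatedly.

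First I would compute the reflections from $s_i(\nu)=\nu-\langle\alpha_i,\nu\rangle\alpha_i$ together with the given Cartan matrix. A direct calculation yields $s_1(\gamma)=(sb+tc-a)\alpha_1+b\alpha_2+c\alpha_3$, and, since nodes $2$ and $3$ are unconnected and so commute, $s_2s_3(\gamma)=a\alpha_1+(sa-b)\alpha_2+(ta-c)\alpha_3$. Writing $M=sb+tc$, the key point is that $s_2s_3$ fixes $a$ and sends $M\mapsto M':=sb'+tc'=Ka-M$, whereas $s_1$ fixes $M$ and sends $a\mapsto M-a=a'$.

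For the second bullet this is essentially immediate: with $a'=M-a$ we have $\frac{a'}{M}=1-\frac{a}{M}$, and since $r_-=1-r_+$, the hypothesis $\frac{a}{M}>r_+$ is literally equivalent to $\frac{a'}{M}<r_-$. For the first bullet I would set $r=\frac{a}{M}$ and note that the new ratio is $\frac{a}{M'}=\frac{r}{Kr-1}=:f(r)$. The hypothesis $b',c'\geq0$ forces $b\leq sa$ and $c\leq ta$, hence $M\leq Ka$, i.e. $r\geq 1/K$, which keeps us on the branch $Kr-1\geq0$ where $f$ is positive (the degenerate endpoint $M'=0$, i.e. $r=1/K$, gives $\frac{a}{M'}=+\infty>r_+$ and is dispatched separately). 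Using $Kr_-^2=Kr_--1$ and $Kr_-r_+=1$ one checks $f(r_-)=\frac{1}{Kr_-}=r_+$, so $f$ interchanges the two bounds; since $f'(r)=-1/(Kr-1)^2<0$, $f$ is strictly decreasing on $(1/K,\infty)$, whence $r<r_-$ gives $f(r)>f(r_-)=r_+$, as required.

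The computation is mechanical once the reflection formulas and the quadratic $Kx^2-Kx+1$ are in hand; there is no genuine analytic difficulty. The only place needing care is the bookkeeping of the non-negativity hypotheses: the role of $b',c'\geq0$ (resp. $a'\geq0$) is precisely to confine $r$ to the interval $[1/K,r_-)$ on which $f$ lands in the correct branch and where the infinite/degenerate case can be read off. I therefore expect the main (minor) obstacle to be verifying that these constraints pin down that domain, rather than anything in the Möbius estimate itself.
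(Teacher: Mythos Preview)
Your argument is correct and is essentially the paper's own proof: the paper computes $s_2s_3\gamma = a\alpha_1+(sa-b)\alpha_2+(ta-c)\alpha_3$, rewrites the new ratio as $\frac{1}{(s^2+t^2)-(sb+tc)/a}$, bounds it via the hypothesis, and simplifies the result to $r_+$, which is exactly your M\"obius calculation $f(r)=r/(Kr-1)$ with $f(r_-)=r_+$, expressed in the reciprocal variable $1/r$. For the second bullet the paper merely says ``similar''; your one-line observation $a'/M=1-a/M$ together with $r_-=1-r_+$ is the intended computation.
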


\begin{proof}
Consider the first case. By definition
$$s_2s_3\beta= a\alpha_1 +(sa-b)\alpha_2+(ta-c)\alpha_3.$$
Then
$$
\begin{aligned}
\frac{a}{s(sa-b)+t(ta-c)} &= \frac{a}{(s^2+t^2)a -(sb+tc)}  \\
&=\frac{1}{ s^2+t^2-\frac{sb+tc}{a}} \\
&\geq \frac{1}{ s^2+t^2-  \frac{1}{ \frac{1}{2}-\frac{\sqrt{(s^2+t^2)^2-4(s^2+t^2)}}{2(s^2+t^2)}}  }\\
&=  \frac{1}{2}+\frac{\sqrt{(s^2+t^2)^2-4(s^2+t^2)}}{2(s^2+t^2)},
\end{aligned}
$$
where the last step requires some simplification. The second case is similar. 
\end{proof}

\begin{Lemma} \label{lem:imcond} For any positive imaginary root $\beta=a \alpha_1+b\alpha_2+c\alpha_3$, 
 $$\frac{1}{2}-\frac{\sqrt{(s^2+t^2)^2-4(s^2+t^2)}}{2(s^2+t^2)}< \frac{a}{sb+tc}<  \frac{1}{2}+\frac{\sqrt{(s^2+t^2)^2-4(s^2+t^2)}}{2(s^2+t^2)}<1.$$
\end{Lemma}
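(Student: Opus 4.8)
Write $N = s^2+t^2$. The plan is to recognize the two displayed bounds as the roots $r_\pm = \tfrac12 \pm \tfrac{\sqrt{N^2-4N}}{2N}$ of the quadratic $f(r) = N r^2 - N r + 1$: by Vieta, $r_+ + r_- = 1$ and $r_+ r_- = 1/N$. Since $f$ opens upward, the asserted two-sided bound $r_- < \tfrac{a}{sb+tc} < r_+$ is equivalent to $f\!\left(\tfrac{a}{sb+tc}\right) < 0$. First I would dispose of the degenerate cases. The bounds are real precisely when $N>4$, and among connected diagrams with $s,t\ge 1$ the only exception is $(s,t)=(1,1)$, which is finite type $A_3$ and has no imaginary roots, so the lemma is vacuous there. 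The outer inequality $r_+<1$ is automatic since $\sqrt{N^2-4N}<N$. Finally, for a positive imaginary root one has $sb+tc>0$: if $sb+tc=0$ then $b=c=0$, whence $\langle\beta,\beta\rangle = 2a^2>0$, contradicting imaginarity; this legitimizes dividing by $(sb+tc)^2$.

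Multiplying $f\!\left(\tfrac{a}{sb+tc}\right)<0$ through by $(sb+tc)^2>0$ turns the target into the polynomial inequality $N a^2 - N a(sb+tc) + (sb+tc)^2 < 0$. The key is to feed in the imaginary-root condition $\langle\beta,\beta\rangle = 2(a^2+b^2+c^2-sab-tac)\le 0$. Multiplying by $N$ and using $a(sb+tc)=sab+tac$ gives
\[ N a^2 - N a(sb+tc) + N(b^2+c^2) \le 0, \]
which differs from the target only in the last term. Lagrange's identity supplies $N(b^2+c^2)-(sb+tc)^2 = (sc-tb)^2 \ge 0$, so subtracting yields
\[ N a^2 - N a(sb+tc) + (sb+tc)^2 = \tfrac{N}{2}\langle\beta,\beta\rangle - (sc-tb)^2 \le 0. \]
This already delivers the non-strict bound $r_- \le \tfrac{a}{sb+tc} \le r_+$.

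The remaining point is upgrading to a strict inequality, and I expect this to be the main obstacle. Equality in the last display would force $\langle\beta,\beta\rangle=0$ together with $sc=tb$ simultaneously, meaning $\tfrac{a}{sb+tc}$ would have to equal a root $r_\pm$ of $f$. But $\tfrac{a}{sb+tc}$ is rational while $r_\pm$ is irrational: $r_\pm$ is rational exactly when $N(N-4)=(N-2)^2-4$ is a perfect square, and factoring $(N-2-m)(N-2+m)=4$ over the integers (the two factors share the same parity, so only the splitting $2\cdot 2$ is possible) forces $N=4$, which never occurs for integers $s,t\ge 1$ with $N>4$. Hence any rational point of the closed interval $[r_-,r_+]$ already lies in its interior, giving the strict bounds. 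I would isolate this irrationality computation as a brief internal remark, since ruling out the boundary (null-root, $(b,c)\parallel(s,t)$) case is precisely where genuinely arithmetic input, rather than the purely quadratic-form manipulation above, is required.
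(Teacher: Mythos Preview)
Your argument is correct and takes a genuinely different route from the paper's. The paper proceeds by induction on $a+sb+tc$, using Lemma~\ref{lem:ref} together with the fact that the Weyl group preserves the set of positive imaginary roots: if one of the bounds failed for $\beta$, then $s_2s_3\beta$ (or $s_1\beta$) would be a strictly smaller positive imaginary root violating the opposite bound, contradicting the inductive hypothesis. By contrast, you bypass both the induction and Lemma~\ref{lem:ref} entirely, working directly from the norm condition $\langle\beta,\beta\rangle\le 0$ via the identity $Na^2 - Na(sb+tc) + (sb+tc)^2 = \tfrac{N}{2}\langle\beta,\beta\rangle - (sc-tb)^2$ and then an irrationality argument for strictness. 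Your approach is more elementary and in fact proves a slightly stronger statement: the bounds hold for \emph{any} integral $\beta$ with $\langle\beta,\beta\rangle\le 0$ and $sb+tc>0$, not just for genuine imaginary roots (cf.\ Example~\ref{ex:strangeneg}, where such non-root elements appear). The paper's approach, on the other hand, exercises the Weyl-group machinery already in place and makes the reflection Lemma~\ref{lem:ref} do double duty, which is natural given that Lemma~\ref{lem:ref} is reused later in Lemma~\ref{lem:sub-quotient}.
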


\begin{proof}
Proceed by induction on $a+sb+tc$. 
The base case is trivial since if $a+sb+tc=1$ there are no imaginary roots. Fix a positive imaginary root $\beta$ with $a+sb+tc>1$. First assume the left inequality fails. Since reflection preserves the set of positive imaginary roots 
$s_2s_3\beta= a\alpha_1 +b'\alpha_2+c'\alpha_3$ 
has $b',c'\geq0$. Then, by Lemma \ref{lem:ref}
$s_2s_3\beta= a\alpha_1 +b'\alpha_2+c'\alpha_3$ satisfies $$\frac{a}{sb'+tc'}> \frac{1}{2}+\frac{\sqrt{(s^2+t^2)^2-4(s^2+t^2)}}{2(s^2+t^2)}.$$ This certainly also implies $a+sb'+tc'<a+sb+tc$ so by induction $s_2s_3 \beta$ is not an imaginary root, contradicting the assumption that $\beta$ is. The other case is similar. 
\end{proof}

\subsection{Specialized notation}

Fix $$T =A \oplus B \oplus C \in \Lambda(a\alpha_1+b\alpha_2+c\alpha_3).$$ 
Take string data as in \S\ref{ss:stringdata} using the sequence $1,2,3,1,2,3, \cdots$. We use the notation $a_1,b_1,c_1,a_2,b_2,c_2, \cdots$ for this string data, where,
$$
\begin{aligned} 
& a_1 = \dim \Hom ({\Bbb C}_1, T) \qquad  \qquad  \mbox{} \\
&b_1 = \dim \Hom ({\Bbb C}_2, T_1 ), \text{ where } T_1= T/1-\text{socle}\\
&c_1 = \dim \Hom ({\Bbb C}_3, T_2 ), \text{ where } T_2= T_1/2-\text{socle}\\
&a_2 = \dim \Hom ({\Bbb C}_1, T_3 ), \text{ where } T_3= T_2/3-\text{socle},
\end{aligned}
$$
and so on. 
Taking every third step in this socle filtration gives a filtration by $I$ graded submodules:
$$T= A_k \oplus B_k \oplus C_k \supset A_{k-1} \oplus B_{k-1} \oplus C_{k-1}  \supset \cdots \supset A_{1} \oplus B_{1} \oplus C_{1}.$$

\begin{Definition}
For each $j$, let $\bar A_j = A_j/A_{j-1}$. More generally, for any $1 \leq i\leq j \leq k$, let $\bar A_{ij} = A_j/A_{i-1}$. Then $\dim \bar A_j = a_j$ and $\dim \overline A_{ij}= a_i+ \cdots+ a_j$. Define subquotients for $B$ and $C$ similarly.
\end{Definition}

\begin{Remark} \label{sub-quotients}
For any $1 \leq i<j \leq k$, 
$$\bar A_{ij}+\bar B_{ij}+\bar C_{ij} \quad \text{and} \quad \bar A_{ij}+\bar B_{i-1,j-1}+\bar C_{i-1,j-1} $$ 
are both sub-quotients of $T$. 
\end{Remark}

\begin{Definition}
Fix $T \in \Lambda(a\alpha_1+b\alpha_2+c\alpha_3)$. Define the following maps:
$${}_1x_2 = \bigoplus_{\so(a)=2, \ta(a)=1} x_a: B \rightarrow A^s, \quad  {}_2x_1= \bigoplus_{\so(a)=1, \ta(a)=2} x_a: A \rightarrow B^s, $$
$${}_1x_3 = \bigoplus_{\so(a)=3, \ta(a)=1} x_a: C \rightarrow A^t, \quad  {}_3x_1= \bigoplus_{\so(a)=1, \ta(a)=3} x_a: A \rightarrow C^t, $$

$${}_1\bar x_2 = \sum_{\so(a)=2, \ta(a)=1} x_a: B^s \rightarrow A, \quad  {}_2\bar x_1= \sum_{\so(a)=1, \ta(a)=2} x_a: A^s \rightarrow B, $$
$${}_1\bar x_3 = \sum_{\so(a)=3, \ta(a)=1} x_a: C^t \rightarrow A, \quad  {}_3\bar x_1= \sum_{\so(a)=1, \ta(a)=3} x_a: A^t \rightarrow C.$$
For any $1 \leq i<j \leq k$ these maps make sense if $A,B,C$ are replaced in the domain with $\overline A_{ij}, \overline B_{ij}$ or $\overline C_{ij}$, and in the range by $\overline A_{ij}, \overline B_{i-1,j-1},$ and $\overline C_{i-1,j-1}$, simply by applying the definition to the sub-quotient representations from Remark \ref{sub-quotients}. We denote the resulting maps with a superscript of $i$ or $ij$. 
\end{Definition}
The preprojective relations are then
$$
{}_ 2\bar x_1 \circ {}_1x_2=0, \quad {}_1 \bar x_2 \circ {}_2x_1+{}_1 \bar x_3 \circ {}_3x_1=0, \quad {}_3 \bar x_1 \circ {}_1x_3=0.
$$

\subsection{Proof of the main theorem}
We will show that the conditions of Theorem \ref{th:main} in fact holds for all stable modules, from which the statement on stable components follows. So, fix a positive imaginary root $\beta =a \alpha_1+b\alpha_2+c\alpha_3$ and  a stable module $T \in \Lambda(a \alpha_1+b\alpha_2+c\alpha_3)$ with string data $a_1,b_1,c_1,a_2...$. Since there are no imaginary roots of the form $b \alpha_2+c\alpha_3$ we can assume $a \neq 0$. 

\begin{Lemma} \label{lem:main0}
For each $1\leq i \leq k$, $a_i \neq 0$ and $b_i$ or $c_i$ is also non-zero. 
\end{Lemma}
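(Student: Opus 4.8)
The plan is to prove that the conditions hold for the socle filtration of an arbitrary stable $T$, working throughout with the fixed charge $c$ and the two consequences of stability: every proper nonzero submodule $S\subsetneq T$ satisfies $\arg(c(\dim S))>\arg(c(\dim T))$, and dually every proper nonzero quotient $Q$ satisfies $\arg(c(\dim Q))<\arg(c(\dim T))$. Since $a\neq 0$ we have $\pi/4<\arg(c(\dim T))<3\pi/4$, with $c(\alpha_1)$ realizing the angle $3\pi/4$ and both $c(\alpha_2),c(\alpha_3)$ realizing $\pi/4$. The other ingredient is the bipartite shape of the diagram: node $1$ is the only node adjacent to $2$ and $3$, and by Remark \ref{rem:connected} there are no extensions between ${\Bbb C}_2$ and ${\Bbb C}_3$, so any module supported on $\{2,3\}$ is semisimple with charge of argument exactly $\pi/4$.

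First I would dispatch $a_1\neq 0$ together with the ``$b_i$ or $c_i$'' clause. The socle of $T$ is a nonzero submodule, so by stability $\arg(c(\dim\operatorname{soc}T))>\arg(c(\dim T))>\pi/4$; were its node-$1$ part zero it would be supported on $\{2,3\}$ and have argument $\pi/4$, a contradiction, so $a_1\neq 0$. For the second clause, the key observation is purely bipartite: since node-$1$ vectors map only into nodes $2,3$, once the node-$1$ socle of a module $U$ is removed the quotient $U/(\text{node-}1\text{ socle})$ has no node-$1$ socle at all. Applying this to $U=T_{3(i-1)}$ shows its next socle lives on $\{2,3\}$, so $b_i$ or $c_i$ is nonzero, \emph{unless} $U$ is already pure node $1$, i.e. $U\cong{\Bbb C}_1^{a_i}$. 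That last case is excluded by quotient-stability for $i\geq 2$ (a pure node-$1$ quotient has argument $3\pi/4>\arg(c(\dim T))$), and by imaginarity of $\beta$ for $i=1$ (it would force $\beta=a\alpha_1$, a real root).

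The remaining, and hardest, point is $a_i\neq 0$ for $2\leq i\leq k$, where the clean submodule/quotient comparison above is not by itself enough. I would argue by contradiction: if $a_i=0$ then, by the bipartite observation, the socle of the quotient $U=T_{3(i-1)}=T/\overline V_{[1,i-1]}$ is supported on $\{2,3\}$, hence semisimple of argument $\pi/4$. Lifting it through $U$, the submodules $\overline V_{[1,i-1]}\subset S\subset \overline V_{[1,i]}$ of $T$ obtained by adjoining these node-$2$ and node-$3$ socle vectors one at a time are genuine submodules (such a vector's only outgoing arrows land in the node-$1$ part of $\overline V_{[1,i-1]}$, so modulo $\overline V_{[1,i-1]}$ it generates only itself), hence each must have argument $>\arg(c(\dim T))$.

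The main obstacle is exactly here: adjoining a single node-$2,3$ vector only nudges the argument of $\overline V_{[1,i-1]}$ downward toward $\pi/4$, so no single step visibly violates stability, and one must show that the \emph{whole} node-$2,3$ socle batch of $U$ is large enough to drag some intermediate submodule's argument down to $\leq\arg(c(\dim T))$. I expect this to require choosing $i$ minimal (so that $a_1,\dots,a_{i-1}\neq 0$ and $\overline V_{[1,i-1]}$ is controlled), together with the quantitative angle-window estimates of Lemmas \ref{lem:ref} and \ref{lem:imcond}, which pin $\tfrac{a}{sb+tc}$ (equivalently $\arg$) into a fixed interval for every positive imaginary root and govern how reflections move it; these should convert the qualitative ``argument decreases'' into an actual contradiction with $\arg(c(\dim S))>\arg(c(\dim T))$. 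The preprojective relations ${}_2\bar x_1\circ{}_1x_2=0$ and ${}_3\bar x_1\circ{}_1x_3=0$ may also be needed to control how the node-$2,3$ socle of $U$ interacts with $\overline V_{[1,i-1]}$.
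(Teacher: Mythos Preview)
Your treatment of $a_1\neq 0$ and of the clause ``$b_i$ or $c_i$ nonzero'' is fine and essentially matches the paper. The gap is in the case $a_i\neq 0$ for $i\geq 2$, which you flag as the hardest point but which is in fact the easiest, and your proposed stability argument there is both incomplete and unnecessary.

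You assume $a_i=0$ and deduce that the socle of $U=T_{3(i-1)}$ is supported on $\{2,3\}$, then try to force a stability violation by adjoining those socle vectors to $\overline V_{[1,i-1]}$; you acknowledge you cannot close this. The observation you are missing is that for $i\geq 2$ the socle of $T_{3(i-1)}$ already has \emph{no} node-$2$ or node-$3$ part. This is your own ``bipartite observation'' applied to the other side of the graph: node-$2$ vectors map only into node $1$, so once the $2$-socle is removed (forming $T_{3i-3}$) no $2$-socle remains; and since nodes $2$ and $3$ are not connected, subsequently removing the $3$-socle (forming $T_{3(i-1)}$) cannot create any new ${\Bbb C}_2$ in the socle. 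Thus $T_{3(i-1)}$ has neither ${\Bbb C}_2$ nor ${\Bbb C}_3$ in its socle. If in addition $a_i=0$, then the $1$-socle also vanishes, so $\operatorname{soc}T_{3(i-1)}=0$ and hence $T_{3(i-1)}=0$, contradicting $i\leq k$. Equivalently, $a_i=0$ forces $b_i=c_i=0$, so the $i$-th filtration step is trivial. No stability, no appeal to Lemmas~\ref{lem:ref} or~\ref{lem:imcond}, is needed; this is exactly the paper's argument.
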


\begin{proof}
First assume some $a_i=0$. If $i>1$, Then two consecutive steps in the filtration are
$$A_{i-1} \oplus B_{i-1} \oplus C_{i-1}, \quad \text{and} \quad A_{i-1} \oplus B_{i} \oplus C_{i}.$$
Since there are no non-trivial extension of the simple modules ${\Bbb C}_2$ and ${\Bbb C}_3$ with themselves or each other, it follows that $B_i=B_{i-1}$ and $C_i=C_{i-1}$ as well. But then $$A_i \oplus B_i \oplus C_i = A_{i-1} \oplus B_{i-1} \oplus C_{i-1}$$
which is impossible.

If $i=1$ then this implies that either ${\Bbb C}_2$ or ${\Bbb C}_3$ is a submodule of $T$. Since $a \neq 0$ this contradicts stability. 

The proof that $b_i$ or $c_i$ is also non-zero for each $i$ is similar. 
\end{proof}

\begin{Lemma} \label{lem:Dyck}
Draw a path in the plane by drawing each 1 as a vertical line of length 1, each 2 as a horizontal line of length $s$, and each 3 as a horizontal line of length $t$. The result is a rational Dyck path. That is, a straight line connecting the beginning and end of the path stays weakly below the path. Furthermore, for any $k$ where the path touches the diagonal, $\frac{b_1+\cdots +b_k}{c_1+\cdots+c_k} \geq  \frac{b}{c}$.
\end{Lemma}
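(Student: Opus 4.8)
The plan is to identify the corners of the path with the dimension vectors of the submodules $A_i \oplus B_i \oplus C_i$ from the socle filtration, and then read off both assertions directly from stability using the explicit description of $\arg\circ c$ recorded in the setup. First I would note that the path only moves up (on the $1$'s) and right (on the $2$'s and $3$'s), so it is a monotone staircase in which the $2$'s and $3$'s of a single block form one uninterrupted horizontal run. Hence the only points that can dip below the diagonal are the corners reached after completing a block $1^{a_1}2^{b_1}3^{c_1}\cdots 1^{a_i}2^{b_i}3^{c_i}$: the next block begins by moving up, so the path never falls below the diagonal in between. Such a corner sits at horizontal coordinate $s(b_1+\cdots+b_i)+t(c_1+\cdots+c_i)$ and height $a_1+\cdots+a_i$, i.e.\ at the point determined by $\dim(A_i\oplus B_i\oplus C_i)=(a_1+\cdots+a_i)\alpha_1+(b_1+\cdots+b_i)\alpha_2+(c_1+\cdots+c_i)\alpha_3$. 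By construction this is a genuine $I$-graded submodule of $T$, being a term of the socle filtration.

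Next I would invoke stability. Under the Harder--Narasimhan convention (HN1)--(HN2), $T$ being stable forces every proper nonzero submodule $S$ to satisfy $\arg(c(\dim S))\ge \arg(c(\beta))$. Applying this to $S=A_i\oplus B_i\oplus C_i$, the inequality $\arg(c(\dim S))\ge\arg(c(\beta))$ is the negation of the strict comparison displayed in the setup, and in particular it forces
$$\frac{s(b_1+\cdots+b_i)+t(c_1+\cdots+c_i)}{a_1+\cdots+a_i}\le \frac{sb+tc}{a},$$
which is exactly the statement that the $i$-th corner lies weakly above the diagonal. Running over all $i$ (the endpoint corner lies on the diagonal trivially) gives the Dyck property.

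For the second assertion, a corner touches the diagonal precisely when the unperturbed slopes agree, $\frac{s(b_1+\cdots+b_i)+t(c_1+\cdots+c_i)}{a_1+\cdots+a_i}=\frac{sb+tc}{a}$. In this equal-slope regime the comparison $\arg(c(\dim S))\ge\arg(c(\beta))$ is decided entirely by the $\epsilon$ tie-breaker, whose characterization in the setup says $\arg$ is weakly larger exactly when $\frac{b_1+\cdots+b_i}{c_1+\cdots+c_i}\ge \frac{b}{c}$. This is the claimed inequality.

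The step I expect to require the most care is the bookkeeping around the infinitesimal $\epsilon$: I must apply the tie-breaking clause in the correct direction, and I should confirm that no proper prefix can be genuinely parallel to $\beta$. Since $\gcd(a,b,c)=1$, the only integral vector parallel to $\beta$ strictly below it is $0$, so the tie is always broken, and arguing this is what guarantees the stability inequality is never a degenerate equality for a proper submodule. I would also double-check the orientation of the stability inequality itself --- that submodules here carry the \emph{larger} argument, opposite to some conventions --- since the whole dictionary between ``above the diagonal'' and ``stable'' hinges on this sign.
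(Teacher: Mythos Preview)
Your proposal is correct and follows exactly the same approach as the paper: the paper's proof is the one-line observation that each prefix corresponds to a submodule of $T$, and failure of either condition would give a submodule violating stability. You have simply unpacked this carefully, including the translation between the charge inequality and the Dyck condition and the $\epsilon$ tie-breaker; the extra discussion of $\gcd(a,b,c)=1$ is not needed for the weak inequalities in the lemma, but it does no harm.
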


\begin{proof}
Each left-prefix of the sequence corresponds to a sub-model of $T$ and if the conditions do not hold then this gives a submodule that violates stability. 
\end{proof}

\begin{Example} Consider the case $s=2,t=1$. 
$$11223122312$$
becomes the path
\begin{center}
\begin{tikzpicture}[scale=0.52]

\draw[line width= 0.02cm] (0,0)--(12,0)--(12,4)--(0,4)--(0,0);
\draw[line width= 0.01cm] (0,1)--(12,1);
\draw[line width= 0.01cm] (0,2)--(12,2);
\draw[line width= 0.01cm] (0,3)--(12,3);
\draw[line width= 0.01cm] (1,0)--(1,4);
\draw[line width= 0.01cm] (2,0)--(2,4);
\draw[line width= 0.01cm] (3,0)--(3,4);
\draw[line width= 0.01cm] (4,0)--(4,4);
\draw[line width= 0.01cm] (5,0)--(5,4);
\draw[line width= 0.01cm] (6,0)--(6,4);
\draw[line width= 0.01cm] (7,0)--(7,4);
\draw[line width= 0.01cm] (8,0)--(8,4);
\draw[line width= 0.01cm] (9,0)--(9,4);
\draw[line width= 0.01cm] (10,0)--(10,4);
\draw[line width= 0.01cm] (11,0)--(11,4);
\draw[line width= 0.06cm] (0,0)--(0,2)--(5,2)--(5,3)--(10,3)--(10,4)--(12,4);
\draw[line width= 0.015cm, dotted] (0,0)--(12,4);

\draw[line width= 0.015cm, <->] (-1,1)--(1,-1);

\draw node at (0,1){$\bullet$};
\draw node at (0,2){$\bullet$};
\draw node at (2,2){$\bullet$};
\draw node at (4,2){$\bullet$};
\draw node at (5,2){$\bullet$};
\draw node at (5,3){$\bullet$};
\draw node at (7,3){$\bullet$};
\draw node at (9,3){$\bullet$};
\draw node at (10,3){$\bullet$};
\draw node at (10,4){$\bullet$};
\draw node at (12,4){$\bullet$};
\end{tikzpicture}
\end{center}
which is not a rational Dyck path because it dips below the diagonal at $(10,3)$.
To see the relationship with stability, overly this picture on the complex plane oriented so that the real axis is at 45 degrees to the picture as shown. Then each node shown along the path is $c(S)$ for the submodule $S$ corresponding to a left subword $s$. The stability condition is violated because one of the nodes is below the diagonal so the charge of that submodule has a smaller argument than the charge of the whole module. 
\end{Example}

\begin{Example} Again consider the case $s=2,t=1$. Both
$$\quad 112331122  \quad \text{and} \quad 112211233 $$
correspond to the Dyck path
\begin{center}
\begin{tikzpicture}[scale=0.52]

\draw[line width= 0.02cm] (0,0)--(8,0)--(8,4)--(0,4)--(0,0);
\draw[line width= 0.01cm] (0,1)--(8,1);
\draw[line width= 0.01cm] (0,2)--(8,2);
\draw[line width= 0.01cm] (0,3)--(8,3);
\draw[line width= 0.01cm] (1,0)--(1,4);
\draw[line width= 0.01cm] (2,0)--(2,4);
\draw[line width= 0.01cm] (3,0)--(3,4);
\draw[line width= 0.01cm] (4,0)--(4,4);
\draw[line width= 0.01cm] (5,0)--(5,4);
\draw[line width= 0.01cm] (6,0)--(6,4);
\draw[line width= 0.01cm] (7,0)--(7,4);
\draw[line width= 0.01cm] (8,0)--(8,4);
\draw[line width= 0.06cm] (0,0)--(0,2)--(4,2)--(4,4)--(8,4);
\draw[line width= 0.015cm, dotted] (0,0)--(8,4);

\draw node at (4,2){$\bullet$};

\end{tikzpicture}
\end{center}
The submodule of $112331122$ corresponding to $11233$ fails stability because it touches the diagonal and has a lower ratio of 2s to 3s than the whole word. 
 $112211233$ does correspond to a stable component since $1122$ has a higher ratio of 2s to 3s. Equivalently, since $2|c(\alpha_3)|$ is slightly larger than $|c(\alpha_2)|$, in the first case the node is actually slightly to the right of the diagonal whereas in the second case it is slightly to the left. 
\end{Example}

\begin{Lemma} \label{lem:main3easy} For every $1 \leq i \leq k$, 
$b_i \leq sa_i$ and  $c_i \leq ta_i$.
\end{Lemma}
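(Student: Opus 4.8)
The plan is to produce, for each $i$, an injective linear map $\bar B_i \hookrightarrow \bar A_i^{\,s}$ and an injective linear map $\bar C_i \hookrightarrow \bar A_i^{\,t}$; comparing dimensions then gives $b_i = \dim \bar B_i \leq s\, \dim \bar A_i = s a_i$ and likewise $c_i \leq t a_i$. These maps will be induced by the arrow maps ${}_1 x_2 \colon B \to A^s$ and ${}_1 x_3 \colon C \to A^t$ already in hand, with no appeal to the preprojective relations.

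First I would pin down the node-$2$ pieces of the filtration explicitly. Since node $2$ is joined only to node $1$, the outgoing arrows from node $2$ in any subquotient are exactly the components of ${}_1 x_2$, so the $2$-socle of a quotient is precisely the kernel of the induced map ${}_1 x_2$. Tracing through the socle construction — and using that within the $i$-th cycle the order of steps is $1,2,3$, so that by the time we read off $b_i$ we have already removed the full node-$1$ space $A_i$ and not merely $A_{i-1}$ — identifies
$$B_i = \{\, v \in B : {}_1 x_2(v) \in A_i^{\,s}\,\}, \qquad B_{i-1} = \{\, v \in B : {}_1 x_2(v) \in A_{i-1}^{\,s}\,\}.$$

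With these descriptions in place, ${}_1 x_2$ maps $B_i$ into $A_i^{\,s}$ and $B_{i-1}$ into $A_{i-1}^{\,s}$, hence descends to a map $\bar B_i = B_i/B_{i-1} \to A_i^{\,s}/A_{i-1}^{\,s} = \bar A_i^{\,s}$. This descended map is injective: a class killed by it has a representative $v \in B_i$ with ${}_1 x_2(v) \in A_{i-1}^{\,s}$, which by the description above forces $v \in B_{i-1}$. A dimension count then gives $b_i \leq s a_i$. The bound $c_i \leq t a_i$ is proved identically, replacing ${}_1 x_2$ by ${}_1 x_3 \colon C \to A^t$, $s$ by $t$, and the $B$'s by the $C$'s — again using that node $3$ is joined only to node $1$, so the intervening node-$2$ quotient plays no role.

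The hard part will be the bookkeeping of the socle filtration rather than any geometry: establishing the explicit form of $B_i$ above, and in particular verifying that the node-$1$ space already removed when $b_i$ is recorded is $A_i$ rather than $A_{i-1}$ (which is exactly what makes the target of the injection equal to $\bar A_i^{\,s}$). Once the socle is correctly identified the injectivity and the dimension count are immediate.
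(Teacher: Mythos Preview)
Your approach is essentially the paper's: both arguments reduce to the injectivity of the induced map ${}_1 x_2 \colon \bar B_i \to \bar A_i^{\,s}$ (and likewise ${}_1 x_3$), followed by a dimension count. The paper argues by contradiction and separates the cases $i>1$ (kernel contradicts the socle filtration) and $i=1$ (kernel contradicts stability), while you give a direct argument via the explicit description of the filtration pieces.

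One small gap to patch: your formula $B_j = \{v \in B : {}_1 x_2(v) \in A_j^{\,s}\}$ is correct for $j \geq 1$, but plugging in $j=0$ yields $\ker {}_1 x_2$, not $0$. So when $i=1$ your injectivity claim silently assumes $\ker {}_1 x_2 = 0$. This does hold, but it comes from stability (a nonzero kernel vector spans a $\mathbb{C}_2$ submodule of $T$, impossible since $a>0$), not from ``tracing through the socle construction.'' That is precisely the $i=1$ case the paper singles out; once you invoke stability there, your argument and the paper's coincide.
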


\begin{proof}
Assume $T$ has this string data and $b_i>sa_i$. Then by dimension count ${}_1x_2: \overline B_i \rightarrow \overline A_i^{\oplus s}$ must have a kernel. If $i>1$ this contradicts the definition of the socle filtration, and if $i=1$ it contradicts stability. Thus $b_i \leq sa_i$  for all $i$. Similarly  $c_i \leq ta_i$.  
\end{proof}

\begin{Lemma}   \label{lem:mainn}
Fix $i<k$ and let
$$n_{Bi}=  \min \{ b_i, s a_{i+1}-b_{i+1}\}, \quad n_{Ci}=  \min \{ c_i, t a_{i+1}-c_{i+1}\}.$$
Then 
 $$a_{i+1} \leq s n_{Bi}+tn_{Ci}- \max \{s^{-1} n_{Bi}, t^{-1} n_{Ci} \}.$$ 
\end{Lemma}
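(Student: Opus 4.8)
The plan is to bound $a_{i+1}=\dim\bar A_{i+1}$ by analysing the two maps leaving the new socle layer $\bar A_{i+1}$, namely $\phi_B={}_2x_1\colon\bar A_{i+1}\to\bar B_i^{\,s}$ and $\phi_C={}_3x_1\colon\bar A_{i+1}\to\bar C_i^{\,t}$ coming from the shifted sub-quotient maps. First I would record the basic injectivity: if $v\in A_{i+1}$ has $\phi_B(\bar v)=0$ and $\phi_C(\bar v)=0$, then ${}_2x_1(v)\in B_{i-1}^{\,s}$ and ${}_3x_1(v)\in C_{i-1}^{\,t}$, so $v$ lies in the $1$-socle of $T/(A_{i-1}\oplus B_{i-1}\oplus C_{i-1})$, i.e.\ $v\in A_i$ and $\bar v=0$. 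Hence $(\phi_B,\phi_C)$ is injective and $a_{i+1}\le\operatorname{rank}\phi_B+\operatorname{rank}\phi_C$.

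Next I would prove the two rank bounds $\operatorname{rank}\phi_B\le sn_{Bi}$ and $\operatorname{rank}\phi_C\le tn_{Ci}$. Writing the $s$ arrows $1\to2$ as $p_1,\dots,p_s\colon\bar A_{i+1}\to\bar B_i$, the map $\phi_B$ is the ``column'' $(p_j)$ while ${}_2\bar x_1$ is the ``row'' $(p_j)$; the elementary inequality $\operatorname{rank}(\text{column})\le s\cdot\operatorname{rank}(\text{row})$ gives $\operatorname{rank}\phi_B\le sR_B$, where $R_B:=\operatorname{rank}\big({}_2\bar x_1|_{\bar A_{i+1}^{\,s}}\big)=\dim\sum_j\im p_j$. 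Because ${}_1x_2|_{\bar B_{i+1}}$ is injective (again by the socle description) and lands in $\ker\big({}_2\bar x_1|_{\bar A_{i+1}^{\,s}}\big)$ by the node-$2$ relation ${}_2\bar x_1\circ{}_1x_2=0$, we get $R_B\le sa_{i+1}-b_{i+1}$; with the trivial $R_B\le b_i$ this yields $R_B\le n_{Bi}$ and hence $\operatorname{rank}\phi_B\le sR_B\le sn_{Bi}$. The same argument at node $3$, with the $t$ arrows $q_1,\dots,q_t\colon\bar A_{i+1}\to\bar C_i$, gives $\operatorname{rank}\phi_C\le tR_C\le tn_{Ci}$, where $R_C=\dim\sum_l\im q_l$. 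At this point $a_{i+1}\le sn_{Bi}+tn_{Ci}$ already follows.

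The extra $-\max\{s^{-1}n_{Bi},t^{-1}n_{Ci}\}$ is the point of the lemma, and it is where the node-$1$ relation enters. Let $U=\sum_j\im p_j\subseteq\bar B_i$ and $U'=\sum_l\im q_l\subseteq\bar C_i$, so $\im(\phi_B,\phi_C)\subseteq U^{\,s}\oplus U'^{\,t}$, and let $\Theta={}_1\bar x_2\oplus{}_1\bar x_3\colon U^{\,s}\oplus U'^{\,t}\to\bar A_i$, $(u,w)\mapsto{}_1\bar x_2(u)+{}_1\bar x_3(w)$. The node-$1$ relation ${}_1\bar x_2\circ{}_2x_1+{}_1\bar x_3\circ{}_3x_1=0$ says exactly that $\Theta\circ(\phi_B,\phi_C)=0$, so $\im(\phi_B,\phi_C)\subseteq\ker\Theta$ and therefore $a_{i+1}\le\dim\big((U^{\,s}\oplus U'^{\,t})\cap\ker\Theta\big)=sR_B+tR_C-\operatorname{rank}\Theta$. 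I would then bound $\operatorname{rank}\Theta$ from below by $\operatorname{rank}\big({}_1\bar x_2|_{U^{\,s}}\big)$, and apply the row/column inequality in the reverse direction, $\operatorname{rank}\big({}_1\bar x_2|_{U^{\,s}}\big)\ge s^{-1}\operatorname{rank}\big({}_1x_2|_{U}\big)$, with the symmetric statement on the $C$-side producing the factor $t^{-1}$; taking the larger of the two accounts for the $\max$. Feeding this into $a_{i+1}\le sR_B+tR_C-\operatorname{rank}\Theta$, and using that $sR_B+tR_C-\max\{s^{-1}R_B,t^{-1}R_C\}$ is monotone in $R_B,R_C$ together with $R_B\le n_{Bi}$, $R_C\le n_{Ci}$, yields the stated bound.

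The main obstacle is this last step: to turn $\operatorname{rank}\big({}_1x_2|_U\big)\ge R_B$ (and its $C$-analogue) into the needed equality one must show that ${}_1x_2$ is injective on $U=\im\big({}_2x_1|_{\bar A_{i+1}}\big)$, i.e.\ that $U$ meets the ``lower'' kernel $\ker\big({}_1x_2|_{\bar B_i}\big)$ only in $0$. This is precisely a compatibility between the image of the outgoing map from layer $i+1$ and the newly created socle inside layer $i$, which is not automatic, and I expect establishing it---presumably from a careful reading of the socle filtration together with the node-$2$ relation---to be the delicate heart of the argument. If one only has $\operatorname{rank}({}_1x_2|_U)=R_B-\dim N$ with $N=U\cap\ker({}_1x_2|_{\bar B_i})\neq0$, the deficiency $\dim N$ would have to be recovered from slack elsewhere, and that is the case I would watch most carefully.
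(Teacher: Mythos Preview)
Your approach is essentially identical to the paper's, and the step you flag as the ``delicate heart of the argument'' is in fact immediate. The map ${}_1x_2^i:\bar B_i\to\bar A_i^{\,s}$ is injective on \emph{all} of $\bar B_i$, for exactly the same reason you already invoked one layer up when you said ``${}_1x_2|_{\bar B_{i+1}}$ is injective (again by the socle description)''. Concretely, $B_i$ is by construction the set of $b\in B$ with ${}_1x_2(b)\in A_i^{\,s}$; hence if $\bar b\in\bar B_i$ has ${}_1x_2^i(\bar b)=0$ in $\bar A_i^{\,s}$ then ${}_1x_2(b)\in A_{i-1}^{\,s}$, so $b\in B_{i-1}$ and $\bar b=0$. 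In particular $\ker({}_1x_2|_{\bar B_i})=0$, so $U\cap\ker({}_1x_2|_{\bar B_i})=0$ automatically and $\operatorname{rank}({}_1x_2|_U)=R_B$ with no deficiency to recover. The paper states this as ``${}_1x_2^i|_{I_b}$ is injective'' and moves on; once you fill in this one line your argument is complete and matches the paper's (your monotonicity observation at the end is equivalent to the paper's manipulation with the error terms $e_{Bi},e_{Ci}$).
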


\begin{proof}
By the definition of the socle filtration, ${}_1 x^{i+1}_2: \overline B_{i+1} \rightarrow \overline A_{i+1}^{\oplus s}$ is injective. Also, 
by the preprojective relations, 
$${}_2 \bar x^i_1  \circ {}_1 x^{i+1}_2: \overline B_{i+1} \rightarrow \overline  B_i$$ 
is the zero map. Hence $$\dim \im {}_2 \bar x^i_1 \leq \dim \overline A_{i+1}^{\oplus s} - \dim \overline B_{i+1} = s a_{i+1}-b_{i+1}.$$
Certainly  $$\dim \im {}_2 \bar x^i_1 \leq \dim \overline B_i=b_i.$$ Putting this together,  $$\dim \im  {}_2 \bar x^i_1 \leq n_{Bi}.$$
Similarly, $\dim  \im  {}_3 \bar x^i_1 \leq n_{Ci}$.

Let $I_b = \im {}_2 \bar x^i_1 \subset \overline  B_i$ and $I_c = \im {}_3 \bar x^i_1 \subset \overline  C_i.$ These have dimensions at most $n_{Bi}$ and $n_{Ci}$ respectively, so define $e_{Bi},e_{Ci} \geq 0$ so that
$$
\dim I_b= n_{Bi}-e_{Bi} \quad \text{and} \quad \dim I_c= n_{Ci}-e_{Ci} 
$$
Again using the definition of a socle filtration, $${}_2x^i_1 \oplus {}_3x^i_1 : \overline A_{i+1} \rightarrow I_b^{\oplus s} \oplus I_c^{\oplus t}$$ is injective, 
so the dimension of its image is $a_{i+1}$.

Now, ${}_1 x^i_2|_{I_b}$ is injective, so its image has dimension $n_{Bi} - e_{Bi}$. Recalling that 
$${}_1 x^i_2|_{I_b}= \bigoplus_{a : \so(a)=2, \ta(a)=1} x^i_a,$$
at least one of these $x^i_a$ must have $\dim \im x^i_a|_{I_b} > s^{-1}(n_{Bi}-e_{Bi}).$ This in turn implies that  $\dim \im {}_1 \bar x^i_2|_{I^s_b} \geq s^{-1} (n_{Bi}-e_{Bi}).$ Similarly, 
 $\dim \im {}_1 \bar x^i_3|_{I_c^t} \geq t^{-1} (n_{Ci}-e_{Ci}).$
 So, 
 $$\dim \im  {}_1 \bar x^i_2 + {}_1 \bar x^i_3|_{I_b^s+I_c^t} \geq \max \{ s^{-1} (n_{Bi}-e_{Bi}) , t^{-1} (n_{Ci}-e_{Ci})\}. $$ 
 Using the preprojective relation again, 
 $$( {}_1 \bar x^i_2 +  {}_1 \bar x^i_3)  \circ  ({}_2x^i_1 \oplus   {}_3x^i_1) =0.$$
 Hence 
$$ 
\begin{aligned}
a_{i+1} & = \dim \im ( {}_2x^i_1 \oplus   {}_3x^i_1) \\ 
& \leq  \dim \ker  (  {}_1 \bar x^i_2 + {}_1 \bar x^i_3 )|_{I_b^s+I_c^t} \\
& = \dim (I_b^s+I_c^t) -  \dim \im  (  {}_1 \bar x^i_2 + {}_1 \bar x^i_3 )|_{I_b^s+I_c^t} \\
& \leq  s  (n_{Bi}-e_{Bi})+ t (n_{Ci}-e_{Ci}) -  \max \{ s^{-1}  (n_{Bi}-e_{Bi}) , t^{-1} (n_{Ci}-e_{Ci})\} \\
&\leq  s  n_{Bi} + t n_{Ci}-s e_{Bi}-te_{Ci} -  \max \{ s^{-1}  n_{Bi} , t^{-1} n_{Ci}\}  +  \max \{ s^{-1}  e_{Bi} , t^{-1} e_{Ci}\}\\
&\leq  s  n_{Bi} + t n_{Ci} -  \max \{ s^{-1}  n_{Bi} , t^{-1} n_{Ci}\},  \\
\end{aligned}
$$
where the last step follows because $s,t \geq 1$ and $e_{Bi}, e_{Ci} \geq 0$. 
\end{proof}

\begin{Lemma} \label{lem:sub-quotient} 
Fix $1< i \leq k$. 
\begin{itemize}
\item Assume that, for some $i$, $\overline A_i \oplus \overline B_i \oplus \overline C_i$ has a submodule of dimension $a',b',c'$ with $\frac{a'}{sb'+tc'}<  \frac{1}{2}-\frac{\sqrt{(s^2+t^2)^2-4(s^2+t^2)}}{2(s^2+t^2)}$. Then $\overline A_i \oplus \overline B_{i-1} \oplus \overline C_{i-1}$ has a submodule of dimension $a', b'',c''$ with $\frac{a'}{sb''+tc''}>  \frac{1}{2}+\frac{\sqrt{(s^2+t^2)^2-4(s^2+t^2)}}{2(s^2+t^2)}$.

\item  Assume that, for some $i$, $\overline A_i \oplus \overline B_{i-1} \oplus \overline C_{i-1}$ has a submodule of dimension $a',b',c'$ with $\frac{a'}{sb'+tc'}> \frac{1}{2}+\frac{\sqrt{(s^2+t^2)^2-4(s^2+t^2)}}{2(s^2+t^2)}$. Then $\overline A_{i-1} \oplus \overline B_{i-1} \oplus \overline C_{i-1}$ has a submodule of dimension $a'', b',c'$ with $\frac{a''}{sb'+tc'}<  \frac{1}{2}-\frac{\sqrt{(s^2+t^2)^2-4(s^2+t^2)}}{2(s^2+t^2)}$.
\end{itemize}
\end{Lemma}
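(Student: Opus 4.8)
\emph{The plan.} I would prove both bullets by the same device: realize the passage between the two modules as a reflection functor that fixes one node's dimension and reflects the others, and then read off the inequality from a single application of Lemma~\ref{lem:ref}. Throughout write
$$\mu=\tfrac12-\tfrac{\sqrt{(s^2+t^2)^2-4(s^2+t^2)}}{2(s^2+t^2)},\qquad \nu=\tfrac12+\tfrac{\sqrt{(s^2+t^2)^2-4(s^2+t^2)}}{2(s^2+t^2)}$$
for the two bounds in Lemma~\ref{lem:ref}. The crucial structural observation is that the socle filtration forces opposite orientations at the central node: on $\overline A_i\oplus\overline B_i\oplus\overline C_i$ the maps ${}_2x_1,{}_3x_1$ out of $\overline A_i$ land in level $i-1$ and so vanish on this sub-quotient, leaving only ${}_1x_2:\overline B_i\to\overline A_i^{\,s}$ and ${}_1x_3:\overline C_i\to\overline A_i^{\,t}$ pointing into $\overline A_i$; dually, on $\overline A_i\oplus\overline B_{i-1}\oplus\overline C_{i-1}$ only ${}_2x_1,{}_3x_1$ survive, pointing out of $\overline A_i$. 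Hence a submodule of the first is a triple $A'\oplus B'\oplus C'$ with ${}_1x_2(B')\subseteq(A')^s$ and ${}_1x_3(C')\subseteq(A')^t$, while a submodule of the oppositely oriented module is constrained only through the surviving arrows out of its node-$1$ part.

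\emph{First bullet.} Given the submodule $A'\oplus B'\oplus C'$, I would keep the node-$1$ part and take the minimal completion to a submodule of $\overline A_i\oplus\overline B_{i-1}\oplus\overline C_{i-1}$, namely $B''={}_2\bar x_1\big((A')^s\big)$ and $C''={}_3\bar x_1\big((A')^t\big)$; by construction ${}_2x_1(A')\subseteq(B'')^s$ and ${}_3x_1(A')\subseteq(C'')^t$, so $A'\oplus B''\oplus C''$ is a submodule. The key estimate is $\dim B''\le sa'-b'$ and $\dim C''\le ta'-c'$. For the first, note ${}_1x_2:\overline B_i\to\overline A_i^{\,s}$ is injective (definition of the socle filtration, as in Lemma~\ref{lem:mainn}), so ${}_1x_2(B')$ is a $b'$-dimensional subspace of $(A')^s$; the preprojective relation at node $2$, ${}_2\bar x_1\circ{}_1x_2=0$, places it inside $\ker{}_2\bar x_1\cap(A')^s$, whence $\dim B''=sa'-\dim\big(\ker{}_2\bar x_1\cap(A')^s\big)\le sa'-b'$, and symmetrically for $C''$. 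Since injectivity of ${}_1x_2|_{B'}$ also gives $b'\le sa'$ and $c'\le ta'$, the vector $a'\alpha_1+(sa'-b')\alpha_2+(ta'-c')\alpha_3=s_2s_3(a'\alpha_1+b'\alpha_2+c'\alpha_3)$ has nonnegative coordinates, so Lemma~\ref{lem:ref} gives $\tfrac{a'}{s(sa'-b')+t(ta'-c')}>\nu$; combined with $s\dim B''+t\dim C''\le s(sa'-b')+t(ta'-c')$ this is the desired inequality for $(a',\dim B'',\dim C'')$.

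\emph{Second bullet.} This is the mirror case, a reflection at node $1$. Now $A'\oplus B'\oplus C'$ is a submodule of $\overline A_i\oplus\overline B_{i-1}\oplus\overline C_{i-1}$, so ${}_2x_1(A')\subseteq(B')^s$ and ${}_3x_1(A')\subseteq(C')^t$. I would keep $B',C'$ and set $A''=\im\Phi$, where $\Phi={}_1\bar x_2\oplus{}_1\bar x_3:(B')^s\oplus(C')^t\to\overline A_{i-1}$, giving the minimal completion to a submodule of $\overline A_{i-1}\oplus\overline B_{i-1}\oplus\overline C_{i-1}$. To see $\dim A''\le sb'+tc'-a'$ I would exhibit an $a'$-dimensional subspace of $\ker\Phi$: the map $\Psi=({}_2x_1\oplus{}_3x_1)|_{A'}:A'\to(B')^s\oplus(C')^t$ is injective (socle filtration, as in Lemma~\ref{lem:mainn}) and $\Phi\circ\Psi=0$ is exactly the preprojective relation ${}_1\bar x_2\circ{}_2x_1+{}_1\bar x_3\circ{}_3x_1=0$ at node $1$, so $\Psi(A')\subseteq\ker\Phi$ has dimension $a'$. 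Since $\nu<1$ forces $sb'+tc'-a'>0$, Lemma~\ref{lem:ref} (second case) applied to $a'\alpha_1+b'\alpha_2+c'\alpha_3$ yields $\tfrac{sb'+tc'-a'}{sb'+tc'}<\mu$, and $\dim A''\le sb'+tc'-a'$ finishes it.

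\emph{The main obstacle.} The ideas above are short; the real work is the index bookkeeping. I must check that each preprojective relation and each injectivity statement descends to the precise sub-quotients involved---that ${}_2\bar x_1\circ{}_1x_2$ and ${}_1\bar x_2\circ{}_2x_1+{}_1\bar x_3\circ{}_3x_1$ vanish as the induced maps $\overline B_i\to\overline B_{i-1}$ and $\overline A_i\to\overline A_{i-1}$, with all domains and ranges shifted by the one level that a map picks up on exiting node $1$. This is exactly what the superscript-$i$/$ij$ notation was designed to track, and once it is verified the linear algebra and the appeal to Lemma~\ref{lem:ref} are immediate.
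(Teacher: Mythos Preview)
Your proposal is correct and follows essentially the same approach as the paper: use injectivity of the socle-filtration maps together with the preprojective relations to bound the dimensions of the ``reflected'' pieces by $sa'-b'$, $ta'-c'$ (respectively $sb'+tc'-a'$), and then invoke Lemma~\ref{lem:ref} to flip the ratio across the fixed points. The paper's proof is terser---it treats only the first bullet and declares the second ``similar''---while you spell out both constructions explicitly; one small quibble is that your justification ``$\nu<1$ forces $sb'+tc'-a'>0$'' is not quite the right reason (nonnegativity follows directly from the injectivity of $\Psi$ you already established), but this does not affect the argument.
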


\begin{proof}
Consider the first case.  By the preprojective relation
$$ {}_2 \bar x^i_1 \circ {}_1 x^i_2 =0.$$
The map $ {}_1 x^i_2 $ is injective, so $0 \leq \dim \im  {}_2 \bar x^i_1 \leq sa'-b'$.
Similarly, $0 \leq \dim \im  {}_3 \bar x^i_1 \leq ta'-c'$. This implies the existence of a submodule of  $\overline A_i \oplus \overline B_{i-1} \oplus \overline C_{i-1}$ of dimension $a',b'',c''$ with 
$$\frac{a'}{sb''+tc''}>\frac{a'}{s(sa'-b')+t(ta'-c')}.$$
This last fraction is the ratio for $s_2s_3 (a'\alpha_1+b'\alpha_2+c'\alpha_3),$ so is larger than 
$\frac{1}{2}+\frac{\sqrt{(s^2+t^2)^2-4(s^2+t^2)}}{2(s^2+t^2)}$ by Lemma \ref{lem:ref}.

The other case is similar. 
\end{proof}

\begin{Lemma} \label{lem:mainr}
For $1 \leq i <k$ and any submodule $\overline A'_{i+1} \oplus \overline B'_i\oplus \overline C'_i$ of  $ \overline A_{i+1} \oplus \overline B_i\oplus  \overline C_i $ with $\dim \overline A'_{i+1}=a_{i+1}', \dim \overline B'_i =b'_i, \dim  \overline C'_i =c'_i$, 
\begin{equation} \label{eq:sm} \frac{a'_{i+1}}{sb'+tc'} \leq \frac{1}{2}+\frac{\sqrt{(s^2+t^2)^2-4(s^2+t^2)}}{2(s^2+t^2)}.
\end{equation}
In particular, 
\begin{equation} \label{eq:sth}
\frac{a_{i+1}}{sn_{Bi}+tn_{Ci}} \leq \frac{1}{2}+\frac{\sqrt{(s^2+t^2)^2-4(s^2+t^2)}}{2(s^2+t^2)}.
\end{equation}
\end{Lemma}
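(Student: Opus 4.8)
The plan is to establish the general bound \eqref{eq:sm} first, by induction on $i$, and then deduce the special case \eqref{eq:sth} from it by exhibiting one explicit submodule. Throughout write $r_+ = \tfrac{1}{2}+\tfrac{\sqrt{(s^2+t^2)^2-4(s^2+t^2)}}{2(s^2+t^2)}$ and $r_- = \tfrac{1}{2}-\tfrac{\sqrt{(s^2+t^2)^2-4(s^2+t^2)}}{2(s^2+t^2)}$ for the two critical ratios. Note that $\overline A_{i+1}\oplus \overline B_i\oplus \overline C_i$ is a genuine $\Lambda$-module, namely the sub-quotient of $T$ from Remark \ref{sub-quotients} (diagonal case), so its submodules make sense. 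The statement \eqref{eq:sm} for index $i$ concerns submodules of $\overline A_{i+1}\oplus \overline B_i\oplus \overline C_i$, and the statement for index $i-1$ concerns submodules of $\overline A_i\oplus \overline B_{i-1}\oplus \overline C_{i-1}$; this matching is what makes the induction close up.

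For the induction, suppose toward a contradiction that some submodule of $\overline A_{i+1}\oplus \overline B_i\oplus \overline C_i$ has ratio $a'/(sb'+tc')>r_+$. In the base case $i=1$, the second part of Lemma \ref{lem:sub-quotient} applied with index $2$ yields a submodule of $\overline A_1\oplus \overline B_1\oplus \overline C_1$ of ratio $<r_-$. But $\overline A_1\oplus \overline B_1\oplus \overline C_1 = A_1\oplus B_1\oplus C_1$ is the bottom step of the filtration, hence an honest submodule of $T$; so we have a submodule of $T$ whose ratio is $<r_-$, which by Lemma \ref{lem:imcond} is strictly below the ratio $a/(sb+tc)$ of $T$ itself. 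A strictly smaller value of (dimension in degree $1$)$/(s\cdot+t\cdot)$ means a strictly smaller charge-argument, so this submodule contradicts the stability of $T$. For the inductive step ($i\geq 2$) I apply Lemma \ref{lem:sub-quotient} twice: its second part (index $i+1$) turns the assumed ratio-$>r_+$ submodule of $\overline A_{i+1}\oplus \overline B_i\oplus \overline C_i$ into a ratio-$<r_-$ submodule of $\overline A_i\oplus \overline B_i\oplus \overline C_i$, and its first part (index $i$) then turns that into a ratio-$>r_+$ submodule of $\overline A_i\oplus \overline B_{i-1}\oplus \overline C_{i-1}$. This is exactly the module controlled by \eqref{eq:sm} at index $i-1$, so the induction hypothesis forbids it. Both applications are legal since the indices $i+1$ and $i$ exceed $1$, which is the standing hypothesis of Lemma \ref{lem:sub-quotient}.

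To deduce \eqref{eq:sth}, I reuse the subspaces $I_b=\im {}_2\bar x^i_1\subseteq \overline B_i$ and $I_c=\im {}_3\bar x^i_1\subseteq \overline C_i$ from the proof of Lemma \ref{lem:mainn}. The one point to check is that $\overline A_{i+1}\oplus I_b\oplus I_c$ is a submodule of $\overline A_{i+1}\oplus \overline B_i\oplus \overline C_i$: each arrow $1\to 2$ maps $\overline A_{i+1}$ into $I_b$, since $I_b$ is by definition the sum of these arrow images, and similarly each $1\to 3$ maps into $I_c$, while the arrows $2\to 1$ and $3\to 1$ automatically land in $\overline A_{i+1}$ as that is the only degree-$1$ part present (nodes $2$ and $3$ are not connected, so there are no further maps to worry about). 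Its dimension vector is $(a_{i+1},\dim I_b,\dim I_c)$ with $\dim I_b\leq n_{Bi}$ and $\dim I_c\leq n_{Ci}$, so \eqref{eq:sm} gives $\tfrac{a_{i+1}}{s\dim I_b+t\dim I_c}\leq r_+$; enlarging the denominator to $sn_{Bi}+tn_{Ci}$ only shrinks the fraction, yielding \eqref{eq:sth}. The denominator is positive because $a_{i+1}\neq 0$ by Lemma \ref{lem:main0} together with the injectivity of ${}_2x^i_1\oplus {}_3x^i_1$.

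The main obstacle is the passage from a destabilizing \emph{sub-quotient} back to a genuine submodule of $T$: Lemma \ref{lem:sub-quotient} only shifts a submodule between neighbouring sub-quotients, so no single application can by itself contradict stability. The induction is precisely the mechanism that marches a hypothetical destabilizing submodule down the filtration until it reaches $A_1\oplus B_1\oplus C_1$, where it becomes an actual submodule of $T$ and Lemma \ref{lem:imcond} can be paired with stability. Apart from this, the only step I expect to require genuine care is verifying that $\overline A_{i+1}\oplus I_b\oplus I_c$ is closed under the arrow maps, which hinges on reading $I_b$ and $I_c$ as sums of individual arrow images rather than as images of the bundled maps.
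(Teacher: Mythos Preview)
Your proof is correct and follows essentially the same approach as the paper's: both argue \eqref{eq:sm} by contradiction, using Lemma~\ref{lem:sub-quotient} once in the base case $i=1$ to reach a genuine submodule of $T$ that violates stability (via Lemma~\ref{lem:imcond}), and twice in the inductive step to descend from index $i$ to $i-1$; both then derive \eqref{eq:sth} by observing that $\overline A_{i+1}\oplus I_b\oplus I_c$ is a submodule with $\dim I_b\leq n_{Bi}$, $\dim I_c\leq n_{Ci}$. Your write-up is in fact more careful than the paper's on two points---the explicit check that $\overline A_{i+1}\oplus I_b\oplus I_c$ is closed under the arrow maps, and the nonvanishing of the denominator---both of which the paper leaves implicit.
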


\begin{proof}
Assume there is a sub-module where \eqref{eq:sm} fails. 
If $i=1$ then, by Lemma \ref{lem:sub-quotient}, 
$ A_1 \oplus B_1 \oplus C_1$ has a submodule $A_1'\oplus B_1'\oplus C_1'$ of dimension $(a_1',b_1',c_1')$ with 
 $$\frac{a_1'}{sb_1'+tc_1'}  < \frac{1}{2}-\frac{\sqrt{(s^2+t^2)^2-4(s^2+t^2)}}{2(s^2+t^2)}.$$
Since $a \alpha_1+b\alpha_2+c\alpha_3$ is imaginary, Lemma \ref{lem:imcond} shows that
$$ \frac{a}{sb+tc}> \frac{1}{2}-\frac{\sqrt{(s^2+t^2)^2-4(s^2+t^2)}}{2(s^2+t^2)}.$$
Taking reciprocals we see that  $A_1'\oplus B_1'\oplus C_1'$ violates stability. 

If $i>1$ then applying Lemma \ref{lem:sub-quotient} twice gives a sub-quotient for a lower $i$ which violates the same condition, and one proceeds by induction. 

To see \eqref{eq:sth}, notice that, as in the proof of Lemma \ref{lem:mainn}, the preprojective relation implies that the image of ${}_2 \overline x_1 \oplus {}_3 \overline x_1; \overline A_{i+1}^{s+t} \rightarrow  \overline B_i\oplus \overline C_i$ has dimension at most $(n_{Bi}, n_{Ci})$. Hence if this equation is violated it immediately gives a sub-quotient violating \eqref{eq:sm}.
\end{proof}

\begin{proof}[Proof of  Theorem \ref{th:main}] 
A generic module in a stable irreducible component is stable, and by definition the string data of a generic module is the string data of the component. Theorem \ref{th:main} gives conditions that must be satisfied by a stable component. To prove that these conditions hold, it suffices to prove that they must hold for the string data of any stable module. In this way:

Lemma \ref{lem:main0} shows \eqref{main0}. 

Lemma \ref{lem:Dyck} shows \eqref{main1} and \eqref{main2}.

Lemma \ref{lem:main3easy} shows \eqref{part:main3easy-s}.

Lemma \ref{lem:mainn} shows \eqref{part:combo}.

Lemma \ref{lem:mainr} shows \eqref{part:ratio}.
\end{proof}

\section{Examples}

\subsection{The case $s=2,t=1$ }
In this case many root multiplicities can be found in \cite[Chapter 11]{Kac:1990}. To estimate the multiplicity of
$\beta= a \alpha_1+ b\alpha_2+c \alpha_3$ with $\text{gcd}(a,b,c)=1$, Theorem \ref{th:main} says we should count words  $1^{a_1} 2^{b_1}3^{c_1} \cdots 1^{a_k} 2^{b_k}3^{c_k}$ in 1,2,3 such that 
\begin{itemize}

\item  For each $i$, $a_i \neq 0$ and $b_i$ or $c_i$ is also non-zero.

\item The resulting path is a rational Dyck path.

\item  If a prefix $1^{a_1} 2^{b_1}3^{c_1} \cdots 1^{a_k} 2^{b_k}3^{c_k}$ corresponds to a point where the Dyck path touches the diagonal, then $\displaystyle  \frac{b_1+\cdots +b_k}{c_1+\cdots+c_k} >  \frac{b}{c}.$

\item $\displaystyle \frac{b_i}{a_i} \leq 2$, $\displaystyle \frac{c_i}{a_i} \leq 1$

\noindent $\mbox{}$ \hspace{-0.3in} Let $n_{Bi}=  \min \{ b_i, 2a_{i+1}-b_{i+1}\}, \ n_{Ci}=  \min \{ c_i,  a_{i+1}-c_{i+1}\}.$ Then 

\item  $\displaystyle a_{i+1} \leq 2 n_{Bi}+n_{Ci}- \max \{\frac{n_{Bi}}{2} , n_{Ci} \}$ \ \text{ and} 

\item $\displaystyle \frac{a_{i+1}}{2n_{Bi}+n_{Ci}} \leq \frac{1}{2}+\frac{\sqrt{5}}{10} \simeq 0.7236$.

\end{itemize}

\noindent The resulting estimates can be found in the Figure \ref{fig:21} for many roots.

\begin{Example}
For $\beta=4\alpha_1+3 \alpha_2+2\alpha_3$, six paths satisfy these conditions:

112312123

112123123

111223123

112211233

112311223

111122233

\noindent However, the root multiplicity is $5$. The word which does not correspond to a valid stable component is 
11{\color{red}2311}223. 
The reason is that the sub-quotient $Q$ corresponding to the sub-string 2311 (shown in red) has the property that the ${\Bbb C}_2$ and ${\Bbb C}_3$ together have only the freedom to map to a single copy of ${\Bbb C}_1$. This implies the existence of a submodule with socle filtration 123 which violates stability. This path will be eliminated by the refined conditions in the next section. 
\end{Example}

\subsection{The case $s=t=2$} .
To estimate the multiplicity of
$\beta= a \alpha_1+ b\alpha_2+c \alpha_3$, Theorem \ref{th:main} now says we should count words in 1,2,3 such that 
\begin{itemize}

\item  For each $i$, $a_i \neq 0$ and $b_i$ or $c_i$ is also non-zero.

\item The resulting path is a Dyck path.

\item  If a prefix $1^{a_1} 2^{b_1}3^{c_1} \cdots 1^{a_k} 2^{b_k}3^{c_k}$ corresponds to a point where the Dyck path touches the diagonal, then $\displaystyle  \frac{b_1+\cdots +b_k}{c_1+\cdots+c_k} >  \frac{b}{c}.$

\item $\displaystyle \frac{b_i}{a_i}, \frac{c_i}{a_i} \leq 2$

\noindent $\mbox{}$ \hspace{-0.3in} Let $n_{Bi}=  \min \{ b_i, 2a_{i+1}-b_{i+1}\}, \ n_{Ci}=  \min \{ c_i,  2a_{i+1}-c_{i+1}\}.$ Then 

\item  $\displaystyle a_{i+1} \leq 2 n_{Bi}+2 n_{Ci}- \max \{\frac{n_{Bi}}{2} , \frac{n_{Ci}}{2} \}.$

\item $\displaystyle \frac{a_{i+1}}{2n_{Bi}+2n_{Ci}} \leq \frac{1}{2}+\frac{\sqrt{2}}{4}$.

\end{itemize}

\noindent The resulting estimates can be found in the Figure \ref{fig:22} for many roots. We now consider a few cases where our estimate is above the actual multiplicity, and which motivate the conditions in the next section.

\begin{Example} 
The path 
	\begin{equation}
		\label{eq:eea}
		1^3 2^3  {\color{red}1^32^2 3 1^4} 2^5
	\end{equation}
	satisfies all the conditions in Theorem \ref{th:main}. However, by the pre-projective relation and the fact that this is a socle filtration, the sub-quotient corresponding to the sub-path $1^3 2^231^4$ (shown in red) has $\dim \im ({}_1x_2 +{}_1x_3) \leq 2$. This then implies the existence of a submodule 
	$$1^32^31^2 2^2 3,$$ 
	and that violates stability. This is caught by Theorem \ref{th:cond2} below with $i=j=2$.
\end{Example}

\begin{Example}
	Now consider 
	$$\underline{1^4 2^3} {\color{red}  1^3\underline{2^2 3 } 1^4\underline{ 2^2 3 } 1^4} 2^6 1^6 2^8.$$
	By the preprojective relation, the sub-quotient corresponding to $2^2 3 1^42^2 3 1^4$ has $\dim \im (x_2 +x_3) \leq 4$. 
	Thus the vectors corresponding to the underlined part of the path must generate a sub-module violating stability. 
	This is caught by Theorem \ref{th:cond2} with $i=2, j=3$ and motivates why we need to consider cases where $i\neq j$ where we are looking at a longer segment of the word.  
\end{Example}

\begin{Example}
	Now take 
	$$1^42^41^42^331^42^61^52^6.$$
	Then the submodule corresponding to $2^331^42^6$ has $\dim \im ({}_2x_1) \leq 2,$ which forces a submodule of the form 
	$$1^42^4{\color{red} 1^42^231^4}.$$
	But now the sub-quotient $ 1^42^231^4$ has $\dim \im ({}_1 x_2+ {}_1 x_3) \leq 2$ 
	forcing a submodule with data
	$$1^42^41^22^23,$$
	and now stability has been violated. 
	The cases when $n_{Bij}= sn_{Aij}-(b_{i+1}+b_{i+2}+\cdots+b_{j+1})$ (or similarly with $n_{Cij}$) in Theorem \ref{th:cond2} catch this sort of problem. 
\end{Example}

\section{Refined conditions}

In \cite{T21} two types of conditions are given: \cite[Theorem 4.3]{T21} gives local conditions that must be satisfied anywhere along the path of a stable component, and \cite[Theorem 4.9]{T21} gives extra conditions that must be satisfied near the beginning of the path and at places where it is near the diagonal. Our Theorem \ref{th:main} gives conditions analogous to \cite[Theorem 4.3]{T21}. We now give some conditions analogous to \cite[Theorem 4.9]{T21}.

\begin{Theorem} \label{th:cond2}
Fix  $\beta= a\alpha_1+b\alpha_2+c \alpha_3$ and a path $1^{a_1}2^{b_1} 3^{c_1} \cdots 1^{a_k}2^{b_k}3^{c_k}$ with
$a_1+\cdots + a_k=a, b_1+\cdots + b_k=b, c_1+\cdots + c_k=c.$ For any  
 $ 1\leq i \leq j < k$, let
\begin{itemize}
    \item $n_{Aij}=a_{i+1}+a_{i+2}+\cdots+a_{j+1}$
    \item $n_{Bij}=\min(b_i+b_{i+1}+\cdots+b_{j},sn_{Aij}-(b_{i+1}+b_{i+2}+\cdots+b_{j+1}))$
    \item $n_{Cij}=\min(c_i+c_{i+1}+\cdots+c_{j}, tn_{Aij}-(c_{i+1}+c_{i+2}+\cdots+c_{j+1}))$
    \vspace{.25cm} 
    \item $\tilde a_{ij}=a_1+a_2+\cdots+a_{i-1}+sn_{Bij}+tn_{Cij}-n_{Aij}$
    \item $\tilde b_{ij}=b_1+b_2+\cdots+b_{i-1} + n_{Bij} $
    \item $\tilde c_{ij}=c_1+c_2+\cdots+c_{i-1}+ n_{Cij}$
\end{itemize}
If this path is the string data of a stable component, then 
\begin{enumerate}
    \item $\frac{\tilde a_{ij}}{s\tilde b_{ij}+t\tilde c_{ij}} \geq \frac{a}{sb+tc}$,
    \item If $\frac{\tilde a_{ij}}{s\tilde b_{ij}+t\tilde c_{ij}} = \frac{a}{sb+tc}$ and $\tilde c_{ij}>0$, then $\frac{\tilde b_{ij}}{\tilde c_{ij}} \geq \frac{b}{c}$.
\end{enumerate}
\end{Theorem}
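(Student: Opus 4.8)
The plan is to reduce the whole statement to applying stability to one explicitly constructed submodule, exactly as in the worked examples preceding the theorem. First I would record what stability says in the present charge: for a stable $T$ of dimension $(a,b,c)$, every submodule $S$ of dimension $(a',b',c')$ satisfies $\frac{a'}{sb'+tc'}\ge \frac{a}{sb+tc}$, and in case of equality also $\frac{b'}{c'}\ge\frac{b}{c}$. This is just the negation of the charge inequality described for $c$ above, with reciprocals taken. Hence it suffices to produce a submodule of $T$ whose dimension vector is $(\tilde a_{ij},\tilde b_{ij},\tilde c_{ij})$, or controllably smaller; conditions (1) and (2) then fall out of stability applied to that submodule.

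Second, I would construct the submodule by generalizing the proof of Lemma \ref{lem:mainn} from a single index to the block $[i,j]$. Working in the quotient $\bar T = T/(A_{i-1}\oplus B_{i-1}\oplus C_{i-1})$, whose preimages of submodules are submodules of $T$ and add $(\dim A_{i-1},\dim B_{i-1},\dim C_{i-1})$ to all dimensions, set $I_b=\im({}_2\bar x^{ij}_1)$ and $I_c=\im({}_3\bar x^{ij}_1)$ for the combined-layer maps $\overline A_{i+1,j+1}^{s}\to\overline B_{ij}$ and $\overline A_{i+1,j+1}^{t}\to\overline C_{ij}$. The node-$2$ and node-$3$ preprojective relations, together with injectivity of ${}_1 x^{i+1,j+1}_2$ and ${}_1 x^{i+1,j+1}_3$ (a socle-filtration fact), give $\dim I_b\le n_{Bij}$ and $\dim I_c\le n_{Cij}$, exactly as $\dim\im{}_2\bar x^i_1\le n_{Bi}$ was obtained before. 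Let $S$ be the submodule generated by lifts of $I_b$ and $I_c$. The node-$1$ relation forces $\im({}_2 x_1\oplus {}_3 x_1)\subseteq\ker({}_1\bar x_2+{}_1\bar x_3)$, so a cokernel count bounds the new $A$-content by $s\dim I_b+t\dim I_c-n_{Aij}$; hence $\dim S=(a',b',c')$ with $b'=\dim B_{i-1}+\dim I_b$, $c'=\dim C_{i-1}+\dim I_c$, and $a'\le \tilde a_{ij}-s\delta_B-t\delta_C$, where $\delta_B=n_{Bij}-\dim I_b\ge0$ and $\delta_C=n_{Cij}-\dim I_c\ge0$.

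Third, I would feed this into stability and absorb the slack using Lemma \ref{lem:imcond}. Writing $D=s\delta_B+t\delta_C\ge 0$ and using $sb+tc-a>0$ (which is $\frac{a}{sb+tc}<1$ from Lemma \ref{lem:imcond}, valid since $\beta$ is imaginary), the chain $\tilde a_{ij}(sb+tc)\ge (a'+D)(sb+tc)\ge a(sb'+tc')+D(sb+tc)=a(s\tilde b_{ij}+t\tilde c_{ij})+D(sb+tc-a)\ge a(s\tilde b_{ij}+t\tilde c_{ij})$ gives condition (1). Equality in (1) forces $D=0$, hence $\delta_B=\delta_C=0$, so $S$ has dimension exactly $(\tilde a_{ij},\tilde b_{ij},\tilde c_{ij})$ and equal first charge-ratio with $\beta$; the tie-breaking clause of stability then yields $\frac{\tilde b_{ij}}{\tilde c_{ij}}\ge\frac{b}{c}$, which is condition (2).

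The main obstacle is the dimension bound on the generated submodule $S$: I must show that generating from $I_b,I_c$ produces no extra $B$- or $C$-content beyond $\dim I_b,\dim I_c$ above the bottom, equivalently that the arrows out of the newly produced $A$-content land back inside $I_b,I_c$ modulo $B_{i-1},C_{i-1}$. For $j=i$ this is automatic, since by the index shift of the socle filtration those arrows drop into $\overline B_{i-1,i-1}=B_{i-1}$ and $\overline C_{i-1,i-1}=C_{i-1}$, which have been quotiented out, and the base case is essentially Lemma \ref{lem:mainn}. For $j>i$ the arrows can reach genuinely new lower layers, and controlling them requires the preprojective relations across the whole block; I expect to organize this as an induction on $j-i$ that peels off the top layer, mirroring the two- and three-step forcings in the preceding examples. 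The two cases in the definition of $n_{Bij}$, namely the full $\overline B_{ij}$ versus the image bound $sn_{Aij}-(b_{i+1}+\cdots+b_{j+1})$, are precisely the two outcomes of that induction, and reconciling their bookkeeping is where most of the care will go.
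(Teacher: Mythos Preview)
Your plan is correct and matches the paper's proof: build the submodule $\tilde M$ as the preimage in $T$ of the submodule of $T/(A_{i-1}\oplus B_{i-1}\oplus C_{i-1})$ generated by $I_b$ and $I_c$, bound its $A$-part via the node-$1$ preprojective relation, and apply stability with exactly your slack-absorbing chain using $\frac{a}{sb+tc}<1$. The obstacle you flag dissolves without any induction on $j-i$: since $I_b=\sum_{a:1\to2}x_a(A_{j+1}/A_{i-1})$ and $I_c=\sum_{a:1\to3}x_a(A_{j+1}/A_{i-1})$ are images of the \emph{submodule} $A_{j+1}/A_{i-1}$, any arrow out of the newly produced $A$-content (which consists of length-two paths applied to $A_{j+1}/A_{i-1}\subset A_{j+1}/A_{i-1}$) lands back inside $x_a(A_{j+1}/A_{i-1})\subset I_b$ or $I_c$ automatically, so the generated submodule has $B$- and $C$-parts exactly $I_b$ and $I_c$ and the paper simply asserts this ``by construction.''
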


\begin{proof}
Fix a stable $\Lambda$-module $T=A\oplus B \oplus C$ with this string data, which must exist if this corresponds to a stable component. Fix $i$ and $j$. 

By the definition of the socle filtration, the map
$${}_1x^{i+1,j+1}_2 :  \overline B_{i+1,j+1}  \rightarrow \overline A_{i+1,j+1}^s$$
is injective. By the preprojective relation,
$$ {}_2 \bar x^{i+1,j+1}_1 \circ {}_1x^{i+1,j+1}_2 :  \overline B_{i+1,j+1} \rightarrow  \overline B_{i,j}$$
is the zero map, 
so 
$$\dim \im  {}_2 \bar x^{i+1,j+1}_1 = \dim \overline A_{i+1,j+1}^s - \dim \ker {}_2 \bar x^{i+1,j+1}_1  \leq s(a_{i+1}+\cdots+a_{j+1})-(b_{i+1}+\cdots+b_{j+1}).$$
Looking at the dimension of the target space, $\dim \im  {}_2 \bar x^{i+1,j+1}_1 \leq b_i+b_{i+1}+\cdots+b_{j}$. Together this means $\dim \im  {}_2 \bar x^{i+1,j+1}_1 \leq n_{Bij}$. Similarly, 
$\dim \im  {}_3 \bar x^{i+1,j+1}_1 \leq n_{Cij}.$ Define $e_{Bij}$ and $e_{Cij}$ by
$$
\dim  \im  {}_2 \bar x^{i+1,j+1}_1 =  n_{Bij} - e_{Bij}, \quad \text{and} \quad \dim \im  {}_3 \bar x^{i+1,j+1}_1  = n_{Cij} - e_{Cij}.
$$

Let $\tilde B_{ij} \subset B_j$ be the preimage of $\im  {}_2 \bar x^{i+1,j+1}_1$ in the quotient $\overline B_{ij}= B_j/B_{i-1}$ and $\tilde C_{ij} \subset C_j$ be the preimage of $\im  {}_3 \bar x^{i+1,j+1}_1$ in $\overline C_{ij}= C_j/C_{i-1}.$ Then
$$
\dim \tilde B_{ij} = \tilde b_{ij} - e_{Bij}, \quad \text{and} \quad \dim \tilde C_{ij} = \tilde c_{ij} - e_{Cij}.
$$

By the preprojective relation, the kernel of
$${}_1 \bar x^{ij}_2 \oplus {}_1 \bar x^{ij}_3 : (\im  {}_2 \bar x^{i+1,j+1}_1)^s \oplus (\im  {}_3 \bar x^{i+1,j+1}_1)^t \rightarrow \overline A_{ij}$$
contains $\im ( {}_2  x^{i+1,j+1}_1 \oplus {}_3 x^{i+1,j+1}_1)$, and by the definition of the socle filtration this map is injective so its image has dimension $n_{Aij}.$ Thus
$$\dim \im ( {}_1 \bar x^{ij}_2 \oplus {}_1 \bar x^{ij}_3 )|_{ (\im  {}_2 \bar x^{i+1,j+1}_1)^s \oplus (\im  {}_3 \bar x^{i+1,j+1}_1)^t }\leq sn_{Bij} - se_{Bij}+t n_{Cij} - te_{Cij} - n_{Aij}.$$

Let $\tilde A_{ij}$ by the preimage of $\im ( {}_1 \bar x^{ij}_2 \oplus {}_1 \bar x^{ij}_3 )|_{ (\im  {}_2 \bar x^{i+1,j+1}_1)^s \oplus (\im  {}_3 \bar x^{i+1,j+1}_1)^t } \subset  \overline A_{ij}$ in $\overline A_{ij}= A_j/A_{i-1}$. Then 
$$\dim \tilde A_{ij} \leq  sn_{Bij} - se_{Bij}+t n_{Cij} - te_{Cij} - n_{Aij} + a_1+ \cdots + a_{j-1}= \tilde a_{ij} - se_{Bij} - te_{Cij}.$$ 

By construction, 
$$\tilde M= \tilde A_{ij} \oplus \tilde B_{ij} \oplus \tilde C_{ij}$$
is a submodule of $T$. The calculations above show
\begin{equation} \label{eq:nd}
\frac{\dim \tilde A_{ij}}{s \dim \tilde B_{ij}+t \dim \tilde C_{ij}} \leq  \frac{\tilde a_{ij} - se_{Bij} - te_{Cij}}{s\tilde b_{ij} +t\tilde c_{ij}  -s e_{Bij} - te_{Cij}}. 
\end{equation}

By stability
\begin{equation} 
\frac{\dim \tilde A_{ij}}{s \dim \tilde B_{ij}+t \dim \tilde C_{ij}} \geq \frac{a}{sb+tc},
\end{equation}
so
\begin{equation} 
\frac{\tilde a_{ij} - se_{Bij} - te_{Cij}}{s\tilde b_{ij} +t\tilde c_{ij}  -s e_{Bij} - te_{Cij}} \geq \frac{a}{sb+tc}.
\end{equation}
We know $\frac{a}{sb+tc}<1$, so it follows that 
$$
\frac{\tilde a_{ij} }{s\tilde b_{ij} +t\tilde c_{ij}  } \geq \frac{a}{sb+tc}.
$$

To get equality $e_{Bij}$ and $e_{Cij}$ must both be zero, and then by stability $\frac{\tilde b_{ij}}{\tilde c_{ij}} \geq \frac{b}{c}$.
\end{proof}

\section{More examples}

\subsection{The case $s=t=2$ revisited}

\begin{Example}
The smallest example in Figure \ref{fig:22} where the number of paths satisfying both Theorems \ref{th:main} and \ref{th:cond2} is not the root multiplicity is $6 \alpha_1+5 \alpha_2+\alpha_3$, where the multiplicity is $21$ but there are 22 paths. The path which appears in our list but which does not correspond to a stable module is 
$$1^32^21^22312^2.$$
It is not too difficult to verify that this satisfies all of our conditions (or you can use our code). To see why it does not correspond to a stable module, notice that
\begin{itemize}
	\item Because of the preprojective relation at node 2, the rightmost $12^2$ implies that this $1$ has no freedom to map to anything in degree 2. So there is a submodule with data $1^3{\color{red} 2^21^231}.$
	
	\item Now the sub-quotient with data $ 2^21^231$ (colored red) has the property that, by dimension count and the preprojective relation, the $2s$ and $3s$ only map to 3 dimensions in degree 1. This implies they generate a module of dimension $3\alpha_1+2\alpha_2+\alpha_3$, which violates stability (because of the smaller ratio of 2's to 3's). 

\end{itemize}
\end{Example}

\subsection{The case $s=2$, $t=1$ revisited}

\begin{Example} \label{ex:33}
In this case
the smallest root on our list where the number of paths satisfying both Theorems \ref{th:main} and \ref{th:cond2} is not the root multiplicity is $6 \alpha_1+5\alpha_2+3\alpha_3$. Somewhat surprisingly, the correct multiplicity in 30 but there are 33 paths in this case. So, the first error is off by 3! The three paths that pass all the conditions but do not correspond to stable irreducible components are
\begin{align}
\label{eq:P33a} 1122112{\color{red}3123123} \\
\label{eq:P33c} 11212123123123 \\
\label{eq:P33d} 11122211233123
\end{align}
To see why these do not correspond to stable components, consider \eqref{eq:P33a}. Look at the quotient module corresponding to the right subword $3123123$ (in red). The map ${}_1 x_3$ has a 2 dimensional image by the definition of the socle filtration, so the  preprojective relation ${}_3 x_1 \circ {}_1 x_3=0$ implies that the map ${}_3 x_1 $ is the zero map on this quotient. So, there is a quotient module $Q$ isomorphic to ${\Bbb C}_3^3$ (which is in fact all of $C$), and  a submodule $P$ with data
$$112211{\bf2121}2.$$
Now, the sub-quotient marked in bold implies the existence of a submodule $P'$ with data 
$$11221212$$
Denote the span of these four $1$s by $W$. Then
the map ${}_1 x_3: Q \rightarrow A/W$ has a one dimensional kernel $K$ by dimension count. Then 
$P'\oplus K$ violates stability. 

The arguments for the other two cases are very similar: in both cases, there is a quotient $Q$ isomorphic to ${\Bbb C}_3^3$ by essentially the same argument, and then the argument that this implies a stability violation proceeds in the same way. 

\end{Example}

One may ask, why is the first example off by three? It seems there is no good reason, as there are smaller examples where the error is 1 if we look at roots which are not minimal, in that they can be reflected to smaller roots. Our method works just fine for such roots, although considering them directly is usually redundant. but let's consider such a case. 

\begin{Example}\label{ex:clean-nonminimal} Take $s=2, t=1$ and
the root $4\alpha_1+3 \alpha_2 +3 \alpha_3$. This can be reflected to  $4\alpha_1+3 \alpha_2+  \alpha_3$ and then to  $3\alpha_1+3 \alpha_2 + \alpha_3$, a root which we know has multiplicity 3. However, let's do the calculation with our methods directly on $4\alpha_1+3 \alpha_2 +3 \alpha_3$. Then a total of 5 paths pass the conditions in Theorem \ref{th:main}:

1 1 2 3 3 1 2 1 2 3

1 1 1 2 2 3 3 1 2 3

1 1 1 1 2 2 2 3 3 3

{\color{red} 1 1 2 3 1 2 3 1 2 3}

{\color{blue} 1 1 2 3 3 1 1 2 2 3}

\noindent The path 1123311223 is ruled out by Theorem \ref{th:cond2} with $i=j=1$. Here 
\begin{itemize}
	\item $n_{Aij}=2$ which is just $a_2$,
	\item $n_{Bij}=1$ since $\min \{ b_1, a_2-2b_2 \} = b_1=1$, 
	\item  $n_{Cij}= 1$ because $\min \{ c_1, a_2-c_2 \} = a_2-c_2=1,$ 
	\item $\tilde a_{ij}= 2 n_{Bij}+ n_{Cij}- n_{Aij}= 2+1-2=1,$
	\item $\tilde b_{ij}= n_{Bij}=1$,
	\item $\tilde c_{ij}= n_{Cij}=1$,
\end{itemize}
where the last three are simpler than in general because $i=1$ so we are at the beginning of the path.
The condition says 
$$\frac{\tilde a_{ij}}{2\tilde b_{ij}+\tilde c_{ij}} \geq \frac{4}{2\times 3+3},$$
which gives $\frac{1}{3} \geq \frac{4}{9}$, which is false. 

The other path which does not correspond to a stable component is 1123123123.
One can see that this is not stable by the same argument as in Example \ref{ex:33}, and in fact the situation is a little simpler here. Specifically:
\begin{itemize}
	\item In the quotient module corresponding to $123123$ the map ${}_1x_{3}$ must be injective by the definition of the socle filtration. By the pre-projective relation this implies that the map ${}_3x_1$ is actually trivial on the whole module, so $C$ is in fact a quotient module. 
	\item There is then a submodule with data $11{\bf 2121}2$. By a reflection the sub-quotient in bold implies the existence of a sub-module $S$ with data $1212$. 
	\item Let $Q= A' \oplus B' \oplus C$
	be the quotient by this submodule. Since $C$ is three dimensional and $A'$ is only 2 dimensional, the map ${}_1 x_3$ on $Q$ has a one-dimensional kernel $K$.
	\item $S \oplus K$ is then a submodule of dimension $2 \alpha_1+2\alpha_2+\alpha_3$, which violates stability. 
\end{itemize} 
The issue causing the counter-examples in this section seem somewhat distinct from those used in Theorems \ref{th:main} and \ref{th:cond2}.
\end{Example}

\section{Additional local conditions}
In the rank two case studied in \cite{T21}, we believe we listed all ``local" conditions, meaning our conditions catch all sub-words which by themselves imply stability fails. In the rank 3 cases studied here this is not the case, which implies our estimates should diverge exponentially from the actual multiplicities. Here are some examples demonstrating the issues, all in the case $s=2,t=1$.

\begin{Example} \label{ex:bad1}
	%Take $s=t=2$, and 
	A word of the form
	$$ ...112211{\color{red}22111}2...$$
	does not (depending on what comes before or after) immediately fail any of the conditions. But the segment colored red corresponds to the sub-quotient $Q$ with socle data $22111$. By the preprojective relation and dimension counts this implies a submodule with data 
	$$...1122{\color{red} 122}.$$
	Now the 1 in the sub-quotient $Q'$ indicated in red has no freedom to map to the $2$s on the level below, which contradicts this being the socle filtration. Thus no words of this form correspond to stable components (or even to unstable components). 
	Note that if the word was of the form 
	$$ ...112311{\color{red} 22111}2...$$
	instead there would have been no problem. 
\end{Example}

\begin{Example} \label{ex:bad2}
	One can of course find conditions to catch Example \ref{ex:bad1}. But one can just find worse examples. For instance, consider string data of the form
	$$.... 1^?2^71^72^71^72^71^72^71^8 2^?...$$
	Then the $2^71^8$ at the right implies a $1^6 2^7$ then a   $2^51^6$  and so on down to a $21^2$ subquotient, and then the two has no freedom to map to any 1s, causing a contradiction. 
	But
	$$.... 1^?2^731^72^71^72^71^72^71^8 2^?...$$
	would not have had this problem. So we need to consider many steps in the path to find the problem. 
\end{Example}

%Problems like that in Example \ref{ex:bad2} are a little hard to detect one must consider many steps of the filtration to find them. 

\begin{Example} \label{ex:strangeneg} Another issue arises because, in rank 3, there are elements of the positive root lattice of negative norm which are not imaginary roots. For instance, consider the word
	$$ 1^{30} 2^{20} 3^{10} 1^{15} 2^{20} 3^{6} 1^{33} 2^{40} 3^{20}.$$
	This satisfies all of our conditions, including the refined conditions in Theorem \ref{th:cond2} below. But the sub-quotient $2^{20} 3^{6} 1^{33}$ implies that this will not be stable. To see why, notice that the $2^{20} 3^{6}$ only has freedom to map to $13$ dimensions, so this implies the existence of a sub-quotient at the next step down with data $1^{13-e_1} 2^{20} 3^{6}$ for some $e_1\geq 0$. The $1^{13-e_1}$ then has the freedom to map to only $6-2e_1$ dimensions in degree 2 and $7-e_1$ in degree $3$, giving a sub-quotient with data
	$2^{6-2e_1-e_2} 3^{7-e_1-e_3} 1^{13-e_1}$ for some $e_2,e_3 \geq 0$. Then at the next step the $2^{6-2e_1-e_2} 3^{7-e_1-e_3}$ can map to a module in degree one of dimension at most $6-4e_1-2e_2-e_3$, which in particular is less then $7-e_1-e_3$. So, at this level the map ${}_1 x_3$ has a kernel, which contradicts the definition of the socle filtration. 
	
	The issue here is related to the fact that $ \beta= 33\alpha_1+20\alpha_2+6 \alpha_3$ has
	$$s_2s_3s_1s_2s_3s_1 \beta = 6\alpha_1+6\alpha_2-\alpha_3.$$
	The fact that we have not been able to rule this out using our ratio conditions is related to the fact that  $|\beta|^2=-110<0$ %. So $\beta$ looks like an imaginary root, 
	but $\beta$ is not an imaginary root since it reflects to a weight with negative coefficients. In this case, if one keeps applying $s_2s_3s_1$ more times the coefficients all become positive again. Our ratio condition is really about ensuring positive coefficients in a limit, so it misses the problem. 
%	Our ratio condition fails to capture a number of such situations. % instances where stability fails due to a sub-quotient having dimension $\beta$ where $$\cdots s_2s_3s_1s_2s_3s_1s_2s_3s_1 \beta $$ results in a negative coefficient.
%	Conditions like this could perhaps be added to our list of conditions in Theorem \ref{th:main}, but it gets complicated. 
	
\end{Example}

\section{Computational data }  %\label{s:st2}
In the cases $s=t=2$ and $
s=2,t=1$ we computed our estimates in many examples using Python\footnote{The code is available at \href{https://webpages.math.luc.edu/~ptingley/lecturenotes/rootmultscode-rank3.py}{https://webpages.math.luc.edu/$\sim$ptingley/}}. The actual multiplicities for $s=2,t=1$ can be found in \cite[Chapter 11]{Kac:1990}. The multiplicities in the case $s=t=2$ were calculated by Alex Feingold using mathematica code originally written by Stephen Miller implementing the Peterson algorithm to calculate the multiplicities, see \cite{Pete:rec}. The results are given in Figures \ref{fig:21} and \ref{fig:22}. 

Note that, for the case $s=t=2$, the symmetry of the Dynkin diagram implies that for any $a,k,\ell$, the roots $a\alpha_1+k \alpha_2+\ell \alpha_3$  and $a\alpha_1+\ell \alpha_2+k \alpha_3$  have the same multiplicity. But our method breaks this symmetry and does not always give the same estimates in these cases. See for example the data for the roots $9\alpha_1+8\alpha_2+7\alpha_3$ and $9\alpha_1+7\alpha_2+8\alpha_3$. The symmetry breaking only comes into effect when $a$ and $k+\ell$ are not relatively prime, so that Dyck paths that touch the diagonal are possible.% So, for instance, our estimates are the same for
%$9\alpha_1+9\alpha_2+7\alpha_3$ and %$9\alpha_1+7\alpha_2+9\alpha_3.$

In both tables, we only lists roots which are minimal in the sense that they cannot be reflected via the Weyl group to roots with smaller coefficients. As in Example \ref{ex:clean-nonminimal} some other cases are non-the-less interesting.

\begin{figure}
\begin{tabular}{|p{2cm}||p{3cm}|p{3cm}|p{3cm}|}
 \hline 
 Case: (A,B,C)  & Actual root multiplicity & Estimate with Theorems \ref{th:main} and \ref{th:cond2} & Estimate with just Theorem \ref{th:main}\\
 \hline
(1, 1, 1)   & 1      & 1      & 1      \\
(2, 1, 1)   & 2      & 2      & 2      \\
(2, 2, 1)   & 3      & 3      & 3      \\
(2, 1, 2)   & 3      & 3      & 3      \\
(3, 2, 1)   & 4      & 4      & 5      \\
(3, 1, 2)   & 4      & 4      & 5      \\
(3, 3, 1)   & 5      & 5      & 5      \\
(3, 1, 3)   & 5      & 5      & 5      \\
(4, 3, 1)   & 7      & 7      & 9      \\
(4, 1, 3)   & 7      & 7      & 11      \\
(4, 4, 1)   & 10     & 10     & 10     \\
(4, 1, 4)   & 10     & 10     & 10     \\
(3, 2, 2)   & 10     & 10     & 10     \\
(3, 3, 2)   & 12     & 12     & 12     \\
(3, 2, 3)   & 12     & 12     & 12     \\
(5, 4, 1)   & 13     & 13     & 19     \\
(5, 1, 4)   & 13     & 13     & 25     \\
(5, 5, 1)   & 16     & 16     & 18     \\
(5, 1, 5)   & 16     & 16     & 18     \\
(6, 5, 1)   & 21     & 22*     & 35     \\
(6, 1, 5)   & 21     & 21     & 35     \\
(6, 6, 1)   & 28     & 28     & 34     \\
(6, 1, 6)   & 28     & 28     & 34     \\
(4, 3, 2)   & 25     & 25     & 25     \\
(4, 2, 3)   & 25     & 25     & 25     \\
(7, 6, 1)   & 35     & 36*     & 68     \\
(7, 1, 6)   & 35     & 35     & 107     \\
(7, 7, 1)   & 43     & 43     & 61     \\
(7, 1, 7)   & 43     & 43     & 61     \\
(8, 7, 1)   & 55     & 58     & 124     \\
(8, 1, 7)   & 55     & 56*     & 212    \\
(4, 3, 3)   & 46     & 46     & 46     \\
(4, 4, 3)   & 58     & 58     & 58     \\
(4, 3, 4)   & 58     & 58     & 58     \\
(5, 4, 2)   & 61     & 61     & 64     \\
(5, 2, 4)   & 61     & 61     & 64     \\
(8, 8, 1)   & 70     & 70     & 114    \\
(8, 1, 8)   & 70     & 70     & 114    \\
%(5, 4, 2)   & 61     & 61     & 64     \\
%(4, 4, 3)   & 58     & 58     & 58     \\
%(4, 3, 4)   & 58     & 58     & 58     \\
%(5, 5, 2)   & 73     & 73     & 76     \\
%(9, 9, 1)   & 105    & 106    & 204    \\
%(10, 10, 1) & 161    & 165    & 377    \\
%(6, 5, 2)   & 133    & 133    & 147    \\
%(5, 4, 3)   & 153    & 153    & 156    \\
%(5, 3, 4)   & 153    & 153    & 156    \\
%(11, 11, 1) & 236    & 244    & 680    \\
%(5, 5, 3)   & 181    & 182    & 185    \\
%(5, 3, 5)   & 181    & 182    & 185    \\
%(7, 6, 2)   & 283    & 283    & 340    \\
%(5, 4, 4)   & 262    & 262    & 265    \\
%(7, 7, 2)   & 335    & 336    & 392    \\
%(5, 5, 4)   & 307    & 308    & 311    \\
%(5, 4, 5)   & 307    & 308    & 311    \\
%(6, 5, 3)   & 439    & 439    & 458    \\
%(8, 7, 2)   & 565    & 567    & 744    \\
%(9, 8, 2)   & 1100   & 1108   & 1612   \\
%(6, 5, 4)   & 969    & 970    & 990    \\
\hline
\hline
(10, 7, 6)  & 251656 & 251911 & 284878 \\
(9, 8, 7)   & 273917 & 275221 & 281488 \\
(9, 7, 8)   & 273917 & 275046 & 281363 \\
(9, 9, 7)   & 303947 & 306371 & 311847 \\
(9, 7, 9)   & 303947 & 306371 & 311847 \\
 \hline
\end{tabular}

\caption{\label{fig:22} Root multiplicities and our estimates for $s=t=2$. The table begins by systematically looking at small roots, where the three cases where are best estimate does not agree with the actual multiplicity are marked with a *. We then jumps to some of the largest roots we were able to work with.}
\end{figure}

\begin{figure}
\begin{tabular}{|p{2cm}||p{3cm}|p{3cm}|p{3cm}|}

\hline

Case: (A,B,C)  & Actual multiplicity & Bound using  Theorems \ref{th:main} and \ref{th:cond2}  & Bound using just Theorem \ref{th:main}\\

\hline

(2, 2, 1)   & 2    & 2    & 2    \\

(3, 3, 1)   & 3    & 3    & 3    \\

(4, 3, 2)   & 5    & 5    & 6    \\

(4, 4, 1)   & 5    & 5    & 6    \\

(5, 5, 1)   & 7    & 7    & 9    \\

(5, 4, 2)   & 11   & 11   & 15   \\

(6, 6, 1)   & 11   & 11   & 16   \\

(5, 5, 2)   & 15   & 15   & 18   \\

(7, 7, 1)   & 15   & 15   & 24   \\

(6, 5, 2)   & 22   & 22   & 34   \\

(8, 8, 1)   & 22   & 22   & 39   \\

(6, 5, 3)   & 30   & 33   & 46   \\

(9, 9, 1)   & 30   & 30   & 61   \\

(7, 6, 2)   & 42   & 42   & 72   \\

(10, 10, 1) & 42   & 42   & 96   \\

(7, 7, 2)   & 56   & 56   & 79   \\

(11, 11, 1) & 56   & 56   & 148  \\

(7, 6, 3)   & 77   & 83   & 121  \\

(8, 7, 2)   & 77   & 77   & 146  \\

(12, 12, 1) & 77   & 77   & 233  \\

(7, 7, 3)   & 101  & 101  & 134  \\

(9, 8, 2)   & 135  & 137  & 283  \\

(8, 7, 3)   & 176  & 187  & 296  \\

(9, 9, 2)   & 176  & 176  & 287  \\

(8, 7, 4)   & 231  & 253  & 379  \\

(8, 8, 3)   & 231  & 233  & 316  \\

(10, 9, 2)  & 231  & 235  & 531  \\

(9, 7, 4)   & 297  & 317  & 725  \\

(9, 8, 3)   & 385  & 410  & 682  \\

(11, 10, 2) & 385  & 399  & 974  \\

(9, 8, 3)   & 385  & 410  & 682  \\

(11, 11, 2) & 490  & 499  & 934  \\

(9, 8, 4)   & 627  & 674  & 1062 \\

(9, 9, 4)   & 792  & 807  & 1107 \\

(10, 9, 3)  & 792  & 839  & 1498 \\

(10, 8, 5)  & 1002 & 1218 & 2335 \\

(10, 9, 4)  & 1574 & 1656 & 2754 \\

(11, 10, 3) & 1574 & 1673 & 3161 \\

(10, 9, 5)  & 1957 & 2167 & 3404 \\

(11, 9, 4)  & 1957 & 2029 & 5113 \\

(11, 11, 3) & 1956 & 2000 & 3134 \\

(11, 9, 5)  & 3007 & 3492 & 6942 \\

(11, 10, 4) & 3713 & 3912 & 6776 \\

\hline

\end{tabular}

\caption{\label{fig:21} Root Multiplicity Data: s=2, t=1.}

\end{figure}

\end{document}